\pgfplotsset{compat=1.15}
\numberwithin{equation}{subsection}
\let\oldsection\section
\renewcommand{\section}{
	\renewcommand{\theequation}{\thesection.\arabic{equation}}
	\oldsection}
\let\oldsubsection\subsection
\renewcommand{\subsection}{
	\renewcommand{\theequation}{\thesubsection.\arabic{equation}}
	\oldsubsection}
\newtheorem{theorem}{Theorem}[section]
\newtheorem{proposition}[theorem]{Proposition}
\newtheorem{lemma}[theorem]{Lemma}
\newtheorem{definition}[theorem]{Definition}
\newtheorem{example}[theorem]{Example}
\newtheorem{notation}[theorem]{Notation}
\newtheorem{question}[theorem]{Question}
\DeclareMathOperator{\mat}{Mat}
\newcommand{\G}{\Gamma}
\newcommand{\CC}{{\mathbb C}}
\newcommand{\ZZ}{{\mathbb Z}}
\newcommand{\MX}{\mat_X(\CC)}
\newcommand{\Es}{E^*}
\newcommand{\R}{{\cal R}}
\title{Certain graphs with exactly one irreducible $T$-module with endpoint $1$, which is thin: the pseudo-distance-regularized case}
\author{
	{Blas Fern\'andez}\\
	{\small Andrej Marušič Institute}\\
	{\small University of Primorska}\\
	{\small Muzejski trg 2, 6000 Koper, Slovenia }\\
	{\small blas.fernandez@famnit.upr.si} 
}
\begin{document}

\definecolor{qqqqtt}{rgb}{0,0,0.2}
\definecolor{qqqqff}{rgb}{0,0,1}
\definecolor{ffxfqq}{rgb}{1,0.4980392156862745,0}
\definecolor{uuuuuu}{rgb}{0.26666666666666666,0.26666666666666666,0.26666666666666666}
\maketitle

\begin{abstract}
Let $\G$ denote a finite, simple and connected graph. Fix a vertex $x$ of $\G$ which is not a leaf and let $T=T(x)$ denote the Terwilliger algebra of $\G$ with respect to $x$. Assume that the unique irreducible $T$-module with endpoint $0$ is thin, or equivalently that $\G$ is pseudo-distance-regular around $x$. We consider the property that $\G$ has, up to isomorphism, a unique irreducible $T$-module with endpoint $1$, and that this $T$-module is thin. The main result of the paper is a combinatorial characterization of this property.
\end{abstract}

\noindent{\em Mathematics Subject Classifications: 05C25}

\noindent{\em Keywords: Terwilliger algebra; irreducible module} 

\section{Introduction}
 
Throughout this section, let $\G$ denote a finite, simple and connected graph. Fix a vertex $x$ of $\G$ which is not a leaf and let $T=T(x)$ denote the Terwilliger algebra of $\G$ with respect to $x$.  The algebra $T$ is non-commutative and semisimple as it is closed under conjugate-transpose. Therefore, in many instances this algebra can best be studied via its irreducible modules.

There has been a sizeable amount of research investigating (distance-regular) graphs that have a Terwilliger algebra $T$ with, up to isomorphism, just a few $T$-modules of a certain endpoint, all of which are (non-)thin (with respect to a certain base vertex); see for example \cite{BF, FM, FM2, FM3, MacLeanMiklavic, MacMik, MacMik2, Safet2, MacMikPen, SPMM, MikPenj,  Safet}. These studies generally try to show that such algebraic conditions hold if and only if certain combinatorial conditions are satisfied. A natural follow-up to these results involving Terwilliger algebras of non-distance-regular graphs is presented here. For the most recent research where the Terwilliger algebra plays an important role, see for example \cite{BR, Hanaki, HHKG, ITO,  LIW, Tanaka, terwilligerzitnik, ITO1} and the references therein. 

It turns out that there exists a unique irreducible $T$-module with endpoint $0$. It was already proved in \cite{TLN} that this irreducible $T$-module is thin if $\G$ is distance-regular around $x$. The converse, however, is not true. Fiol and Garriga \cite{FG} later introduced the concept of  {\em pseudo-distance-regularity} around vertex $x$, which is based on assigning weights to the vertices where the weights correspond to the entries of the (normalized) positive eigenvector. They showed that the unique irreducible $T$-module with endpoint $0$ is thin if and only if $\G$ is pseudo-distance-regular around $x$ (see also \cite[Theorem 3.1]{fiol3}). Moreover, Fernández and Miklavič recently gave a purely combinatorial characterization of the property, that the irreducible $T$-module with endpoint $0$ is thin (see \cite[Theorem 6]{FM3}). This characterization involves the number of walks of a certain shape between vertex $x$ and vertices at some fixed distance from $x$.

Assume that the unique irreducible $T$-module with endpoint $0$ is thin, or equivalently that $x$ is pseudo-distance-regularized. The main goal of this paper is to find a combinatorial characterization of graphs, which also have a unique irreducible $T$-module of endpoint $1$ (up to isomorphism), and this module is thin. If $\G$ is distance-regular, then this situation occurs if and only if $\G$ is bipartite or almost-bipartite \cite[Theorem 1.3]{CN}. If $\G$ is distance-biregular, then again $\G$ has (up to isomorphism) a unique irreducible $T$-module with endpoint $1$, and this module is thin (see \cite{FM}). The case when $\G$ is distance-regular around $x$ but not necessarily distance-regularized (distance-regular or distance-biregular) was recently considered in \cite{BF, FM2}.   Here we generalize the above results to the case when $\G$ is not necessarily distance-regular around $x$ and thus solve  \cite[Problem 9.1]{FM2}. The main result of the paper is a combinatorial characterization of such graphs that involves the number of some walks in $\G$ of a particular shape. Moreover, we give examples of graphs that possess the above mentioned combinatorial properties. We remark that this paper is a generalization of previous efforts in \cite{Collins, BC, CN, BF, FM, FM2} to understand and classify graphs which are pseudo-distance-regular around a fixed vertex and also have a unique irreducible $T$-module (up to isomorphism) with endpoint 1, and this module is thin.  

{Our paper is organized as follows. In Section \ref{sec:prelim} we recall basic definitions and results about Terwilliger algebras that we will find useful later in the paper. In Section \ref{sec:main} we then state our main result in Theorem \ref{thm:main}. In Section \ref{sec:lin_dep}, we prove that certain matrices of the 	Terwilliger algebra are linearly dependent, and we use this in Section \ref{sec:ac}  to prove the main result. In Section \ref{sec:comments}, we have some comments about certain distance partitions of graphs which are pseudo-distance-regular around a fixed vertex and also have a unique irreducible $T$-module (up to isomorphism) with endpoint 1, and this module is thin. We finish the article presenting some examples in Section \ref{sec:9}.}


\section{Preliminaries}
\label{sec:prelim}

In this section we review some definitions and basic concepts. Throughout this paper, $\Gamma=(X, \R)$ will denote a finite, undirected, connected graph, without loops and multiple edges, with vertex set $X$ and edge set $\R$.

Let $x,y \in X$. The \textbf{distance} between $x$ and $y$, denoted by $\partial(x,y)$, is the length
of a shortest $xy$-path. The \textbf{eccentricity of $x$}, denoted by $\epsilon(x)$, is the maximum distance between $x$ and any other vertex of $\G$: $\epsilon(x) = \max\{\partial(x,z) \mid z \in X \}$. Let $D$ denote the maximum eccentricity of any vertex in $\G$. We call $D$ the \textbf{diameter of $\G$}.
For an integer $i$ we define $\G_i(x)$ by 
$$
\G_i(x)=\left\lbrace y \in X \mid \partial(x, y)=i\right\rbrace. 
$$
We will abbreviate $\G(x)=\G_1(x)$. Note that $\G(x)$ is the set of neighbours of $x$. 
Observe that $\G_{i}(x)$ is empty if and only if $i<0$ or $i>\epsilon(x)$.

We now recall some definitions and basic results concerning a Terwilliger algebra of $\G$. Let $\CC$ denote the complex number field. Let $\MX$ denote 
the $\CC$-algebra consisting of all matrices whose rows and columns are indexed by $X$ and whose entries are in $\CC$. Let $V$ denote the vector space over $\CC$ consisting of column vectors whose coordinates are indexed by $X$ and whose entries are in $\CC$. We observe $\MX$ acts on $V$ by left multiplication. We call $V$ the \textbf{standard module}. We endow $V$ with the Hermitian inner 
product $\langle \, , \, \rangle$ that satisfies $\langle u,v \rangle = u^{\top}\overline{v}$ for 
$u,v \in V$, where $\top$ denotes transpose and $\overline{\phantom{v}}$ denotes complex conjugation. For $y \in X$, let $\widehat{y}$ denote the element of $V$ with a $1$ in the ${y}$-coordinate and $0$ in all other coordinates. We observe $\{\widehat{y}\;|\;y \in X\}$ is an orthonormal basis for $V$.

Let $A \in \MX$ denote the adjacency matrix of $\G$:
$$
\left( A\right) _{xy}=
\begin{cases}
	\hspace{0.2cm} 1 \hspace{0.5cm} \text{if} & \partial(x,y)=1,   \\
	\hspace{0.2cm} 0 \hspace{0.5cm} \text{if} &  \partial(x,y) \neq 1,
\end{cases} \qquad (x,y \in X).
$$
The \textbf{adjacency algebra of $\G$} is a commutative subalgebra $M$ of $\MX$ generated by the adjacency matrix $A$ of $\G$. 

We now recall the dual idempotents of $\G$. To do this
fix a  vertex $x \in X$ and let $d=\epsilon(x)$. We view $x$ as a \textit{base vertex}. For $ 0 \le i \le d$, let $E_i^*=E_i^*(x)$ denote the diagonal matrix in $\MX$ with $(y,y)$-entry as follows:
\begin{eqnarray*}
	\label{den0}
	(\Es_i)_{y y} = \left\{ \begin{array}{lll}
		1 & \hbox{if } \; \partial(x,y)=i, \\
		0 & \hbox{if } \; \partial(x,y) \ne i \end{array} \right. 
	\qquad (y \in X).
\end{eqnarray*}
We call $\Es_i$ the \textbf{$i$-th dual idempotent of $\G$ with respect to $x$} \cite[p.~378]{Terwilliger1992}. We also observe
(ei)   $\sum_{i=0}^d E_i^*=I$;
(eii)  $\overline{E_i^*} = E_i^*$ $(0 \le i \le d)$;
(eiii) $E_i^{*\top} = E_i^*$ $(0 \le i \le d)$;
(eiv)  $E_i^*E_j^* = \delta_{ij}E_i^* $ $(0 \le i,j \le d)$ where $I$ denotes the identity matrix in $\MX$.
By these facts, matrices $E_0^*,E_1^*, \ldots, E_d^*$ form a basis for a commutative subalgebra $M^*=M^*(x)$ of $\MX$.  Note that for $0 \le i \le d$ we have
\begin{equation}
	\label{eq:dim}
\Es_i V = {\rm Span} \{ \widehat{y} \mid y \in \G_i(x)\},  
\end{equation}
and that 
\begin{equation}
	\label{vsub}
	V = E_0^*V + E_1^*V + \cdots + E_d^*V \qquad \qquad {\rm (orthogonal\ direct\ sum}). \nonumber 
\end{equation}
We call $\Es_i V$ the \textbf{$i$-th subconstituent of $\G$ with respect to $x$}. 
Moreover $\Es_i$ is the projection from $V$ onto $\Es_i V$ for $0 \le i \le d$. For convenience we define $\Es_{-1}$ and $\Es_{d+1}$ to be the zero matrix of $\MX$.

\medskip \noindent

We next recall the definition of a Terwilliger algebra of $\G$ which was first studied in \cite{Terwilliger1992}. Let $T=T(x)$ denote the subalgebra of $\MX$ generated by $M$, $M^*$. We call $T$ the \textbf{Terwilliger algebra of $\G$ with respect to $x$}. Recall $M$ is generated by $A$ so $T$ is 
generated by $A$ and the dual idempotents. We observe $T$ has finite dimension. In addition,  by construction $T$ is closed under the conjugate-transpose map and so $T$ is semi-simple. For a vector subspace $W \subseteq V$, we denote by $TW$ the subspace $\{B w \mid B \in T, w \in W\}$. 

We now recall the lowering, the flat and the raising matrix of $T$. 

\begin{definition} \label{def2} 
	Let $\G=(X,\R)$ denote a simple, connected, finite graph. Pick $x \in X$. Let $d=\epsilon(x)$ and let $T=T(x)$ be the Terwilliger algebra of $\G$ with respect to $x$. Define $L=L(x)$, $F=F(x)$ and $R=R(x)$ in $\MX$ by
	\begin{eqnarray}\label{defLR}
		L=\sum_{i=1}^{d}E^*_{i-1}AE^*_i, \hspace{1cm}
		F=\sum_{i=0}^{d}E^*_{i}AE^*_i, \hspace{1cm}
		R=\sum_{i=0}^{d-1}E^*_{i+1}AE^*_i. \nonumber 
	\end{eqnarray}
	We refer to $L$, $F$ and $R$ as the \textbf{lowering}, the \textbf{flat} and the \textbf{raising matrix with respect to $x$}, respectively. Note that $L, F, R \in T$. Moreover, $F=F^{\top}$, $R=L^{\top}$ and $A=L+F+R$.
\end{definition}
\noindent 
Observe that for $y,z \in X$ we have the $(z,y)$-entry of $L$ equals $1$ if $\partial(z,y)=1$ and $\partial(x,z)= \partial(x,y)-1$, and $0$ otherwise. The $(z,y)$-entry of  $F$ is equal to $1$ if $\partial(z,y)=1$ and $\partial(x,z)= \partial(x,y)$, and $0$ otherwise. Similarly, the $(z,y)$-entry of $R$ equals $1$ if $\partial(z,y)=1$ and $\partial(x,z)= \partial(x,y)+1$, and $0$ otherwise. Consequently, for $v \in \Es_i V \; (0 \le i \le d)$ we have
\begin{equation}
	\label{eq:LRaction}
	L v \in \Es_{i-1} V, \qquad  F v \in \Es_{i} V, \qquad R v \in \Es_{i+1} V.
\end{equation}

\medskip 
By a \textbf{$T$-module} we mean a subspace $W$ of $V$, such that $TW \subseteq W$. Let $W$ denote a $T$-module. Then $W$ is said to be \textbf{irreducible} whenever $W$ is nonzero and $W$ contains no $T$-modules other than $0$ and $W$. Since the algebra $T$ is semi-simple, it turns out that any $T$-module is an orthogonal direct sum of irreducible $T$-modules.

Let $W$ be an irreducible $T$-module. We observe that $W$ is an orthogonal direct sum of the nonvanishing subspaces $E_i^*W$ for $0 \leq i \leq d$. By the \textbf{endpoint} of $W$ we mean $r:=r(W)=\min \{i \mid 0 \le i\le d, \; \Es_i W \ne 0 \}$. Define the \textbf{diameter} of $W$ by $d^{\prime}:=d^{\prime}(W)=\left|\{i \mid 0 \le i\le d, \; \Es_i W \ne 0 \} \right|-1 $. Using the idea from \cite[Lemma 3.9(ii)]{Terwilliger1992} we have $\Es_iW \neq 0$ if and only if $r \leq i \leq r+d^{\prime}$ $(0 \leq i \leq d)$. We also say that $W$ is \textbf{thin} whenever the dimension of $E^*_iW$ is at most 1 for $0 \leq i \leq d$.

Let $W$ and $W^{\prime}$ denote two irreducible $T$-modules. By a \textbf{$T$-isomorphism} from $W$ to $W^{\prime}$ we mean a vector space isomorphism $\sigma: W \rightarrow W^{\prime}$ such that $\left( \sigma B - B\sigma \right) W=0$ for all $B \in T$. The $T$-modules $W$ and $W^{\prime}$ are said to be \textbf{$T$-isomorphic} (or simply \textbf{isomorphic}) whenever there exists a $T$-isomorphism $\sigma: W \rightarrow W^{\prime}$. We note that isomorphic irreducible $T$-modules have the same endpoint. It turns out that two non-isomorphic irreducible $T$-modules are orthogonal.

Observe that the subspace $T\widehat{x}=\{B\widehat{x} \; | \; B \in T\}$ is a $T$-module. Suppose that $W$ is an irreducible $T$-module with endpoint $0$. Then, $\widehat{x}\in W$, which implies that  $T\widehat{x}\subseteq W$. Since $W$ is irreducible, we therefore have $T\widehat{x}=W$. Hence, $T\widehat{x}$ is the unique irreducible $T$-module with endpoint $0$. We refer to $T\widehat{x}$ as the {\bf trivial $T$-module}. If the trivial $T$-module is thin, then vectors $R^i\widehat{x} \; (0 \le i \le d)$ form a basis of the trivial $T$-module (see \cite{FM3} for more details). In the rest of this paper we will study irreducible $T$-modules of endpoint $1$. Therefore, we will first characterize those vertices $x$ of $\G$, for which the corresponding Terwilliger algebra $T=T(x)$ has no irreducible $T$-modules with endpoint $1$.

\begin{proposition}
	\label{prop:no_modules}
	Let $\G=(X,\R)$ denote a simple, finite, connected graph. Pick a vertex $x \in X$ and let $T=T(x)$ denote the corresponding Terwilliger algebra. Then, there are no irreducible $T$-modules with endpoint $1$ if and only if $\dim(\Es_1T\widehat{x})=|\G(x)|$. In particular, if the trivial module is thin, there are no irreducible $T$-modules with endpoint $1$ if and only if $|\G(x)|=1$.
\end{proposition}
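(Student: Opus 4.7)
The plan is to exploit the orthogonal decomposition of the standard module $V$ into irreducible $T$-modules, paying attention to how $E_1^*$ acts on each summand. Since $T$ is semisimple, we can write $V = T\widehat{x} \oplus W$ where $W$ is the orthogonal direct sum of all irreducible $T$-modules whose endpoint is at least $1$. Because $E_1^* \in T$, both $T\widehat{x}$ and $W$ are $E_1^*$-invariant, so
\begin{equation*}
E_1^* V \;=\; E_1^* T\widehat{x} \;\oplus\; E_1^* W.
\end{equation*}
On the other hand, by \eqref{eq:dim} we have $\dim(E_1^* V) = |\Gamma(x)|$. This already reduces the statement to showing that $E_1^* W = 0$ if and only if there are no irreducible $T$-modules with endpoint $1$.

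For that equivalence, I would argue as follows. Decompose $W$ as an orthogonal direct sum of irreducible $T$-modules $W = \bigoplus_\alpha W_\alpha$, each with endpoint $r(W_\alpha) \ge 1$. Since each $W_\alpha$ is $E_1^*$-invariant and $E_1^* W_\alpha \neq 0$ is equivalent to $r(W_\alpha) = 1$ (as the endpoint is the smallest index with $E_i^* W_\alpha \neq 0$), we get
\begin{equation*}
E_1^* W \;=\; \bigoplus_\alpha E_1^* W_\alpha \;=\; \bigoplus_{r(W_\alpha)=1} E_1^* W_\alpha.
\end{equation*}
Hence $E_1^* W = 0$ precisely when no $W_\alpha$ has endpoint $1$, which (since every irreducible $T$-module with endpoint $1$ must be isomorphic, and hence equal up to the chosen decomposition, to one of the summands of $W$) is equivalent to saying no irreducible $T$-module has endpoint $1$. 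Combined with the dimension identity, this gives the main equivalence.

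For the ``in particular'' claim, assume the trivial module $T\widehat{x}$ is thin. Then $\dim(E_i^* T\widehat{x}) \le 1$ for all $i$; in particular $\dim(E_1^* T\widehat{x}) \le 1$. Moreover, if $|\Gamma(x)| \ge 1$ the vector $R\widehat{x}$ is a nonzero element of $E_1^* T\widehat{x}$ (by \eqref{eq:LRaction} applied to $\widehat{x} \in E_0^* V$), so in fact $\dim(E_1^* T\widehat{x}) = 1$. Substituting into the equivalence proved above, the condition $\dim(E_1^* T\widehat{x}) = |\Gamma(x)|$ becomes $|\Gamma(x)| = 1$, as claimed.

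The argument is essentially bookkeeping with the orthogonal decomposition; the only point that needs a bit of care is justifying that $E_1^* W = 0$ truly detects the absence of an endpoint-$1$ irreducible summand (as opposed to merely detecting that no summand has endpoint exactly $1$ in some fixed decomposition). Since isomorphic irreducible $T$-modules share the same endpoint, this causes no difficulty.
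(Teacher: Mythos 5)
Your proof is correct and follows essentially the same route as the paper: both arguments rest on the orthogonal decomposition of $V$ into irreducible $T$-modules, the observation that $E_1^*$ annihilates every summand of endpoint at least $2$, and the fact that thinness together with the nonzero vector $R\widehat{x}$ forces $\dim(E_1^*T\widehat{x})=1$. The only cosmetic differences are that the paper proves the converse by contraposition and cites an external lemma for $E_1^*T\widehat{x}=\operatorname{Span}\{R\widehat{x}\}$, whereas you verify the nonvanishing of $R\widehat{x}$ directly.
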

\begin{proof}
	Let $V$ denote the standard module, and let $T\widehat{x}$ denote the trivial $T$-module. We observe $\Es_1T\widehat{x}\subseteq \Es_1V$ and so, $\dim(\Es_1T\widehat{x})\leq \dim(\Es_1V)=|\G(x)|.$
		
	Assume first that there are no irreducible $T$-modules with endpoint $1$. Since $V$ is orthogonal direct sum of irreducible $T$-modules and none of these $T$-modules has endpoint $1$ we have $\Es_1V=\Es_1T\widehat{x}$ which implies that $\dim(\Es_1T\widehat{x})=\dim(\Es_1V)=|\G(x)|$. 
	
	We next proceed by contraposition. Suppose there exists an irreducible $T$-module $W$ with endpoint $1$. Let $V_1$ be the sum of all irreducible $T$-modules with endpoint $1$. Note that $\Es_1W$ is nonzero and since $\Es_1W\subseteq \Es V_1$, we have $\dim(\Es_1V_1)>0$. We also have $ \Es_1V=\Es_1T\widehat{x}+\Es_1V_1$. This shows that
	$$|\G(x)|=\dim(\Es_1V)=\dim(\Es_1T\widehat{x})+\dim(\Es_1V_1)>\dim(\Es_1T\widehat{x}).$$ 
	
	To prove the second part of our assertion, recall that if $T\widehat{x}$ is thin, by \cite[Lemma 9]{FM3}, the subspace $\Es_1T\widehat{x}$ is spanned by the nonzero vector $R\widehat{x}$. This concludes the proof. 
\end{proof}

In view of Proposition \ref{prop:no_modules}, we will assume that $|\G(x)| \ge 2$ from now on.


\section{The Main Result}
\label{sec:main}

Throughout this section let $\G=(X,\R)$ denote a connected graph. Here we state our main result. To do this we need the following definitions. 

\bigskip 

We first define a certain partition of $X$ that we will find useful later. 

\begin{definition}
	\label{def:D}
	Let $\G=(X,\R)$ denote a graph with diameter $D$. Pick $x,y \in X$, such that $y \in \G(x)$. For integers $i,j$ we define sets $D^i_j := D^i_j(x,y)$ as follows: 
	$$
	D^i_j=\G_i(x)\cap \G_j(y).
	$$
	Observe that $D^i_j=\emptyset$ if $i<0$ or $j<0$. Similarly,  $D^i_j=\emptyset$ if $i>\epsilon(x)$ or $j>\epsilon(y)$. Furthermore, by the triangle inequality we have that $D^i_j = \emptyset$ if $|i-j| \ge 2$. Note also that if $\G$ is bipartite, the set $D_i^i$ is empty for $0 \le i \le D$. 
	The collection of all the subsets  $D^{i}_{i-1}$ {$(1\leq i \leq \epsilon(x))$}, $D^{i}_{i}$ {$(1\leq i \leq \min\left\lbrace \epsilon(x), \epsilon(y)\right\rbrace )$} and $ D_i^{i-1}$ {$(1\leq i \leq \epsilon(y))$} is called the \textbf{distance partition of $\Gamma$ with respect to the edge $\left\lbrace x, y\right\rbrace $}. 
\end{definition}

%

Next, we consider walks of a certain shape with respect to a given vertex in $\G$. 

\begin{definition}
	\label{def:shape}
	Let $\G=(X,\R)$ denote a connected graph. Pick $x,y,z \in X$ and let $P=[y=x_0, x_1, \ldots, x_j=z]$ denote a $yz$-walk. The \textbf{shape of $P$  with respect to $x$} is a sequence of symbols $t_1 t_2 \ldots t_j$, where $t_i \in \{f, \ell , r\}$, and such that  $t_i =r$ if $\partial(x,x_i) = \partial(x,x_{i-1})+1$,  $t_i =f$ if $\partial(x,x_i) = \partial(x,x_{i-1})$ and  $t_i =\ell$ if $\partial(x,x_i) = \partial(x,x_{i-1})-1 \; (1 \le i \le j)$.
	We use exponential notation for shapes containing several consecutive identical symbols. For instance, instead of $rrrr fff \ell \ell r$ we simply write $r^4 f^3 \ell^2 r$. Analogously, $r^0f=f$ and $r^{0}\ell=\ell r^{0}=\ell$ is also conventional. {For a non-negative integer $m$, let  $\ell r^m(y,z)$, $r^m \ell(y,z)$, $r^{m}f(y,z)$ and $r^{m}(y,z)$ respectively denote the number of $yz$-walks of the shape $\ell r^m$, $r^m \ell$, $r^{m}f$ and $r^{m}$ with respect to $x$} where $r^0(y,z)=1$ if $y=z$ and $r^0(y,z)=0$ otherwise. {We abbreviate  $r^m \ell(z)=r^m \ell(x,z)$, $r^mf(z)=r^mf(x,z)$ and $r^m(z)=r^m(x,z)$.} 
\end{definition}

The following observation is straightforward to prove (using elementary matrix multiplication and \eqref{eq:LRaction}). 

\begin{lemma}
	\label{walks}
	Let $\G=(X,\R)$ denote a connected graph. Pick $x \in X$ and let $T=T(x)$ denote the Terwilliger algebra of $\G$ with respect to $x$. Let $L=L(x)$, $F=F(x)$ and $R=R(x)$ denote the lowering, the flat and the raising matrix of $T$, respectively. 
	Pick $y, z \in X$ and let $m$ be a non-negative integer. Then the following {(i)-(iv)} hold:
	\begin{enumerate}[label={(\roman*)}]
		\item The $(z,y)$-entry of  $R^m$ is equal to the number  $r^m(y,z)$ with respect to $x$. 
		\item The $(z,y)$-entry of $LR^m$ is equal to the number  $r^m \ell(y, z)$ with respect to $x$. 
		\item The $(z,y)$-entry of $R^mL$ is equal to the number  $\ell r^m (y, z)$ with respect to $x$. 
		\item The $(z,y)$-entry of $FR^m$ is equal to the number  $r^mf (y, z)$ with respect to $x$. 
	\end{enumerate} 
\end{lemma}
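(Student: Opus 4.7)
The plan is to prove (i) by induction on $m$, and then deduce (ii)--(iv) in one line each from a single matrix-product expansion combined with the entry-by-entry descriptions of $L$, $F$, and $R$ recorded in the paragraph that immediately follows Definition \ref{def2}.

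For (i), the base case $m=0$ is immediate: $R^0=I$, so $(R^0)_{z,y}=\delta_{z,y}$, which agrees with the convention $r^0(y,z)=\delta_{y,z}$ from Definition \ref{def:shape}. For the inductive step, assume the claim for $m$ and expand
\[
(R^{m+1})_{z,y}=\sum_{w\in X} R_{z,w}\,(R^m)_{w,y}.
\]
Since $R_{z,w}=1$ exactly when $\partial(z,w)=1$ and $\partial(x,z)=\partial(x,w)+1$, i.e. when the step $w\to z$ is an $r$-step with respect to $x$, each nonzero summand corresponds to appending a terminal $r$-step to a $yw$-walk of shape $r^m$. This is in bijection with the $yz$-walks of shape $r^{m+1}$, proving (i).

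For (ii), I would write
\[
(LR^m)_{z,y}=\sum_{w\in X} L_{z,w}\,(R^m)_{w,y},
\]
and note that $L_{z,w}=1$ iff the step $w\to z$ is an $\ell$-step with respect to $x$; combined with (i), this sum counts exactly the $yz$-walks that consist of a length-$m$ walk of shape $r^m$ from $y$ to some $w$ followed by a single $\ell$-step from $w$ to $z$, namely the $yz$-walks of shape $r^m\ell$. Parts (iii) and (iv) are obtained by the same one-step argument applied to
\[
(R^m L)_{z,y}=\sum_{w\in X}(R^m)_{z,w}\,L_{w,y}
\quad\text{and}\quad
(FR^m)_{z,y}=\sum_{w\in X} F_{z,w}\,(R^m)_{w,y},
\]
using that $L_{w,y}=1$ iff $y\to w$ is an $\ell$-step and $F_{z,w}=1$ iff $w\to z$ is an $f$-step, which yields walks of shapes $\ell r^m$ and $r^m f$, respectively.

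There is no substantive obstacle in this argument; it is essentially bookkeeping of which indexing convention is used for walk shapes (read left-to-right along the traversal $y=x_0,x_1,\ldots,x_j=z$). The only point that deserves a brief sanity check is the $m=0$ boundary: the conventions $r^0\ell=\ell$, $\ell r^0=\ell$, and $r^0 f=f$ make the identities collapse to the direct entry descriptions of $L$ (twice) and $F$ already recorded after Definition \ref{def2}, so consistency holds. The only role played by \eqref{eq:LRaction} in the argument is a structural one: it guarantees that the subconstituent level is tracked correctly at each multiplication, which is what makes the combinatorial interpretation of the indices in each summand unambiguous.
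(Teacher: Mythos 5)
Your proof is correct and matches the paper's approach: the paper gives no written proof, stating only that the lemma follows from elementary matrix multiplication together with the subconstituent action of $L$, $F$ and $R$, which is precisely the induction-plus-one-step-expansion argument you carry out. Your bookkeeping of the shape conventions (reading left to right along the walk) and the $m=0$ boundary check are both consistent with Definitions \ref{def2} and \ref{def:shape}.
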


For the rest of the paper we adopt the following notation.
\begin{notation}
	\label{not1}
	Let $\G=(X,\R)$ denote a finite, simple, connected graph with vertex set $X$, edge set $\R$ and diameter $D$. Let $A\in \MX $  denote the adjacency matrix of $\G$. Fix a vertex $x \in X$ with $|\G(x)| \ge 2$. Let $d$ denote the eccentricity of $x$. Let $\Es_i\in \MX \; (0 \le i \le d)$ denote the dual idempotents of $\G$ with respect to $x$. Let $V$ denote the standard module of $\G$ and let $T=T(x)$ denote the Terwilliger algebra of $\G$ with respect to $x$. Let $L=L(x)$, $F=F(x)$ and $R=R(x)$ denote the lowering, the flat and the raising matrix of  $T$, respectively. Assume that the unique irreducible $T$-module with endpoint $0$ is thin. We denote this $T$-module by $T\widehat{x}$. For $y \in \G(x)$ let the sets $D^i_j=D^i_j(x,y)$ be as defined in Definition \ref{def:D}. For $w,z \in X$ let the numbers  $r^{m}\ell(w, z)$, $r^{m}f(w, z)$ and  $r^{m}( w, z)$ be as defined in Definition \ref{def:shape}. 
\end{notation}

We are now ready to state our main result.
\begin{theorem}
	\label{thm:main}
	With reference to Notation \ref{not1}, the following { (i)-(ii)} are equivalent: 
	\begin{enumerate}[label={(\roman*)}]
		\item
		$\G$ has, up to isomorphism, a unique irreducible $T$-module with endpoint $1$, and this module is thin. 
		\item
		 For every integer $i$ $\; (1\leq i \leq d)$ there exist scalars $\kappa_i, \mu_i$, $\theta_i, \rho_i$, such that for every $y \in \G(x)$ the following { (a), (b)} hold:
			\begin{enumerate}[label={(\alph*)}]
			\item For every $z \in D^{i}_{i+1}(x,y) \cup D^{i}_{i}(x,y)$ we have  
			\begin{eqnarray}
				r^{i}\ell(y, z) &=& \mu_i \; \ell r^i (y, z),  \nonumber \\
				r^{i-1}f(y, z) &=& \rho_i \; \ell r^i (y, z). \nonumber
			\end{eqnarray}
			\item For every $z \in D^i_{i-1}(x,y)$ we have  
			\begin{eqnarray}
				r^{i}\ell(y, z) &=& \kappa_i \; r^{i-1}( y, z)+\mu_i \; \ell r^i (y, z),  \nonumber \\
				r^{i-1}f(y, z) &=& \theta_i \; r^{i-1}( y, z)+\rho_i \; \ell r^i (y, z). \nonumber
			\end{eqnarray}
		\end{enumerate}
		Moreover, $\rho_i=0$ whenever the set  $D^i_{i+1}(x,y)$ is nonempty for some $y\in \G(x)$.  
	\end{enumerate}
	\end{theorem}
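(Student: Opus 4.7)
The plan is to translate the scalar identities of (ii) into vector identities in $V$ involving the vectors $R^{i-1}\widehat y$, $LR^i\widehat y$, $FR^{i-1}\widehat y$ and $R^iL\widehat y=R^i\widehat x$. By Lemma \ref{walks} the $z$-coordinate of each of these vectors is the corresponding walk count, so conditions (a) and (b) are jointly equivalent to the pair of vector identities
\[
LR^i\widehat y=\kappa_i\,R^{i-1}\widehat y+\mu_i\,R^iL\widehat y,\qquad FR^{i-1}\widehat y=\theta_i\,R^{i-1}\widehat y+\rho_i\,R^iL\widehat y,
\]
valid for every $y\in\G(x)$ and $1\le i\le d$. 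The ``moreover'' clause is automatic: testing $z\in D^i_{i+1}(x,y)$ gives $r^{i-1}f(y,z)=r^{i-1}(y,z)=0$ while $\ell r^i(y,z)\ge 1$, forcing $\rho_i=0$.

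The key device is the vector $\widetilde y:=\widehat y-|\G(x)|^{-1}R\widehat x$, the orthogonal projection of $\widehat y$ onto the orthogonal complement of $T\widehat x$. Since every irreducible $T$-module with endpoint $\ge 2$ has trivial $\Es_1$-subconstituent, one has $\Es_1V=\Es_1T\widehat x\oplus\Es_1V_1$, where $V_1$ denotes the sum of the endpoint-$1$ irreducibles, so $\widetilde y\in V_1$; a direct computation using $LR\widehat x=|\G(x)|\widehat x$ yields $L\widetilde y=0$. Pseudo-distance-regularity (equivalently, thinness of $T\widehat x$) furnishes scalars $e_{i+1},f_i'$ with $LR^{i+1}\widehat x=e_{i+1}R^i\widehat x$ and $FR^i\widehat x=f_i'R^i\widehat x$. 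Substituting $\widehat y=\widetilde y+|\G(x)|^{-1}R\widehat x$ in the identities above and using these scalars to collapse the resulting correction terms inside $T\widehat x$, the identities become
\[
LR^i\widetilde y=\kappa_i\,R^{i-1}\widetilde y,\qquad FR^{i-1}\widetilde y=\theta_i\,R^{i-1}\widetilde y.
\]

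For (ii)$\Rightarrow$(i), these reduced identities together with $R\cdot R^{i-1}\widetilde y=R^i\widetilde y$ and $L\widetilde y=0$ show that $T\widetilde y$ is thin, with $\Es_i(T\widetilde y)$ spanned by $R^{i-1}\widetilde y$. Since $T\widetilde y\subseteq V_1$, every irreducible summand of $T\widetilde y$ has endpoint $1$; but $\Es_1(T\widetilde y)=\mathrm{Span}(\widetilde y)$ is one-dimensional, forcing $T\widetilde y$ itself to be irreducible. Because $\kappa_i,\theta_i$ do not depend on $y$, the modules $\{T\widetilde y:y\in\G(x)\}$ are mutually $T$-isomorphic, and since $\Es_1V_1=\mathrm{Span}\{\widetilde y:y\in\G(x)\}$ we conclude $V_1=\sum_yT\widetilde y$, establishing (i). For the converse (i)$\Rightarrow$(ii), the hypothesis gives a unique thin isomorphism class inside $V_1$, so decomposing any $v\in\Es_1V_1$ into components in the isomorphic summands yields $LR^iv=\kappa_iR^{i-1}v$ and $FR^{i-1}v=\theta_iR^{i-1}v$ for scalars independent of $v$; applying this to $v=\widetilde y$ and reversing the computation recovers the original vector identities, and reading off $z$-coordinates via Lemma \ref{walks} gives (a) and (b).

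The principal obstacle I anticipate is the algebraic bookkeeping around the projection $\widetilde y$: one must invoke precise formulas for how $LR^{i+1}$, $FR^i$, and $R^iL$ act on the trivial module, which is where the linear-dependence results of Section \ref{sec:lin_dep} come into play. A secondary subtlety is the irreducibility of $T\widetilde y$: it does not rely on a fine analysis of the scalars $\kappa_i,\theta_i$, but purely on the endpoint-graded decomposition of $V$ combined with the one-dimensionality of $\Es_1(T\widetilde y)$.
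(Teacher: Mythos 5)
Your proposal is correct in substance and reaches the same conclusion, but it packages the argument differently from the paper. For (i)$\Rightarrow$(ii) the paper first proves an abstract linear-dependence statement (Theorem \ref{lin dep 2}: any three matrices $\Es_iF_1\Es_1,\Es_iF_2\Es_1,\Es_iF_3\Es_1$ are dependent for $i\le d'+1$, any two for $i>d'+1$, i.e.\ $\dim \Es_iT\Es_1\le 2$), then specializes to the four matrices $LR^i$, $R^{i-1}$, $FR^{i-1}$, $R^iL$ in Lemma \ref{lemma:lincomb} --- which forces a nontrivial case analysis when the coefficient of $\Es_iLR^i\Es_1$ or $\Es_iFR^{i-1}\Es_1$ in the dependence relation happens to vanish --- and finally reads off $(z,y)$-entries via Lemma \ref{walks}. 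You instead decompose the specific vector $\widehat y=|\G(x)|^{-1}R\widehat x+\widetilde y$ and let thinness of $T\widehat x$ and of the endpoint-$1$ modules act separately on the two components; this is the same underlying mechanism (the orthogonal splitting $\Es_1V=\Es_1T\widehat x\oplus \Es_1V_1$ plus one-dimensionality of the relevant subconstituents), but it sidesteps the degenerate-case analysis of Lemma \ref{lemma:lincomb} entirely and produces explicit formulas such as $\mu_i=|\G(x)|^{-1}(e_{i+1}-\kappa_i)$, at the cost of not yielding the more general dimension bound on $\Es_iT\Es_1$ that the paper records. You also supply a self-contained sketch of (ii)$\Rightarrow$(i) via irreducibility of $T\widetilde y$, which the paper deliberately omits and delegates to the argument of \cite[Section 6]{BF}; your sketch is essentially that argument. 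Two details you should make explicit when writing this up: first, that for a thin irreducible endpoint-$1$ module $W$ one indeed has $\Es_iW={\rm Span}(R^{i-1}v)$, which requires the standard observation that $R^{i-1}v=0$ forces $\Es_iW=0$ by irreducibility; second, that the mutual isomorphism of the modules $T\widetilde y$ in (ii)$\Rightarrow$(i) needs the vanishing pattern of the vectors $R^{i-1}\widetilde y$ to be independent of $y$, which follows from $\Vert R^i\widetilde y\Vert^2=\kappa_i\Vert R^{i-1}\widetilde y\Vert^2$ since the $\kappa_i$ do not depend on $y$.
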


With reference to Notation \ref{not1}, assume that $\G$ satisfies part $(ii)$ of Theorem \ref{thm:main}. The proof that in this case $\Gamma$ has, up to isomorphism, exactly one irreducible $T$-module with endpoint $1$, and that this module is thin is omitted as it can be carried out using similar arguments as in the proof of \cite[Theorem 4.4]{BF} (see \cite[Section 6]{BF}). Therefore, in the rest of this article, we will focus on the proof that part ${(i)}$ of Theorem \ref{thm:main} implies the combinatorial conditions ${(a), (b)}$ described in part ${(ii)}$ of Theorem \ref{thm:main}.

\medskip 

With reference to Notation \ref{not1}, assume that $\G$ is distance-regular around $x$ (see \cite{GodST} for the definition of distance-regularity around a vertex). In this case, it was proved in \cite{FM3, TLN} that the unique irreducible $T$-module with endpoint $0$ is thin. In addition, for an integer $i \; (1\leq i \leq d)$ and vertices $y\in \G(x), z\in \G_i(x)$, we observe the number of $yz$-walks of the shape $\ell r^i$ with respect to $x$ is equal to the number of paths of length $i$ from $z$ to $x$. Since $x$ is distance-regularized, there are precisely $c_i(x) c_{i-1}(x) \cdots c_1(x)$ such paths. Consequently, $\ell r^i(y,z)=c_i(x) c_{i-1}(x) \cdots c_1(x)$ and so, $\ell r^i(y,z)$ is independent of the choice of $y$ and $z$. Therefore,  \cite[Theorem 4.4]{BF} and \cite[Theorem 4.4]{FM2} immediately follow from Theorem \ref{thm:main} and the above comments.

\medskip 
We finish this section with the following observations which will be needed later for the proof of Theorem \ref{thm:main}.

\begin{proposition}\label{ril}
	With reference to Notation \ref{not1}, the following holds for $0 \le i \le d$:
	\begin{eqnarray}
	\left( E_i^*R^iLE_1^*\right)_{zy} =\left\{ \begin{array}{lll}
	\displaystyle \ell r^i (y, z) \displaystyle  & \hbox{if } \; y\in \G(x) \hbox{ and } z \in \G_i(x), \\
	0 & \hbox{otherwise}. \end{array} \right. \nonumber
	\end{eqnarray}
In particular, $E_i^*R^iLE_1^*$ is nonzero.
\end{proposition}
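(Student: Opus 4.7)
The plan is short: the proposition is essentially a restatement of Lemma \ref{walks}(iii) combined with the support properties of the dual idempotents.

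First I would use that each $E_j^*$ is diagonal with $(E_j^*)_{ww} = 1$ precisely when $w \in \G_j(x)$, and $0$ otherwise. Consequently, left-multiplying any matrix $B \in \MX$ by $E_i^*$ kills every row whose index lies outside $\G_i(x)$, while right-multiplying by $E_1^*$ kills every column whose index lies outside $\G(x)$. Hence $(E_i^* R^i L E_1^*)_{zy} = (R^i L)_{zy}$ whenever $y \in \G(x)$ and $z \in \G_i(x)$, and the entry vanishes otherwise. This disposes of the ``otherwise'' line in the claim.

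Second, I would invoke Lemma \ref{walks}(iii) with $m = i$ to identify $(R^i L)_{zy}$ with $\ell r^i(y,z)$, the number of $yz$-walks of shape $\ell r^i$ with respect to $x$. Combining the two observations yields the displayed formula for the $(z,y)$-entry.

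For the nonzero assertion I would exhibit one pair $(y,z)$ producing a positive entry. Since $i \le d = \epsilon(x)$, the set $\G_i(x)$ is nonempty. When $i \ge 1$, pick any $z \in \G_i(x)$ together with a shortest $xz$-path $x = v_0, v_1, \ldots, v_i = z$; then $y := v_1 \in \G(x)$, and the concatenation $y, v_0, v_1, \ldots, v_i$ is a $yz$-walk whose first step decreases the distance to $x$ (shape $\ell$) and whose next $i$ steps each increase it (shape $r^i$). Thus $\ell r^i(y,z) \ge 1$. For the edge case $i=0$, any neighbor $y$ of $x$ (existing because $|\G(x)| \ge 2$) and $z = x$ give $\ell r^0(y,x) = 1$. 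No step here poses a real obstacle; the proposition is a bookkeeping consequence of Lemma \ref{walks}, and the only minor subtlety is ensuring a suitable walk exists to witness nonzeroness, which is handled by following a shortest path out from $x$.
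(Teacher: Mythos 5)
Your proposal is correct and follows essentially the same route as the paper: the support properties of $E_i^*$ and $E_1^*$ handle the ``otherwise'' case, Lemma \ref{walks}(iii) identifies the surviving entries with $\ell r^i(y,z)$, and nonzeroness is witnessed by a walk that first steps from a neighbour of $x$ down to $x$ and then follows a shortest path out to $\G_i(x)$. The paper's proof is merely terser (it declares the first two steps ``straightforward'' and asserts $\ell r^i(y,z)>0$ without exhibiting the walk), so your write-up adds detail but no new ideas.
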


\begin{proof}
	It is straightforward to check that the $(z,y)$-entry of $\Es_i R^i L\Es_1$ is zero if either $y \not \in \G(x)$ or $z \not \in \G_i(x)$. It is also straightforward to check that the result is true if $i=0$. Suppose now that $y \in \G(x)$ and $z\in \G_{i}(x)$ with $i \ge 1$. Then $\left( E_i^*R^iLE_1^*\right)_{zy}=\left( R^iL\right)_{zy}$ and the result follows from Lemma \ref{walks}. Note also that in this case we have that $\ell r^i(y,z)>0$ and so, $E_i^*R^iLE_1^*$ is nonzero. 
\end{proof}

\begin{proposition}\label{ri-1}
	With reference to Notation \ref{not1}, the following holds for $1 \le i \le d$:
	\begin{eqnarray}
		\left( E_i^*R^{i-1}E_1^*\right)_{zy} =\left\{ \begin{array}{lll}
			\displaystyle r^{i-1} (y, z) \displaystyle  & \hbox{if } \; y\in \G(x) \hbox{ and } z \in \G_i(x), \\
			0 & \hbox{otherwise}. \end{array} \right. \nonumber
	\end{eqnarray}
	In particular, $E_i^*R^{i-1}E_1^*$ is nonzero.
\end{proposition}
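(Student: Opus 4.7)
The plan is to mirror the proof of Proposition \ref{ril}, since the structure of $E_i^* R^{i-1} E_1^*$ is essentially identical to that of $E_i^* R^i L E_1^*$: in both cases, the outer factors $E_1^*$ and $E_i^*$ are diagonal projections onto subconstituents, and the inner factor is a polynomial in $R$ (and $L$) whose entries count walks of a prescribed shape via Lemma \ref{walks}. First, I would exploit the fact that $E_1^*$ is the diagonal matrix with $1$'s on the positions indexed by vertices of $\G(x)$ (and zeros elsewhere), and similarly $E_i^*$ restricts rows to $\G_i(x)$. This immediately gives
\[
(E_i^* R^{i-1} E_1^*)_{zy} \;=\; (E_i^*)_{zz}\,(R^{i-1})_{zy}\,(E_1^*)_{yy},
\]
which forces the entry to be $0$ unless $y \in \G(x)$ and $z \in \G_i(x)$, handling the ``otherwise'' branch.

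For the main branch, once $y \in \G(x)$ and $z \in \G_i(x)$, the above display reduces to $(R^{i-1})_{zy}$, and an appeal to Lemma \ref{walks}(i) identifies this with $r^{i-1}(y,z)$. I would also briefly check the degenerate case $i=1$ separately, where $R^{i-1}=I$ and $r^{0}(y,z)$ equals $1$ if $y=z$ and $0$ otherwise, matching $E_1^* I E_1^* = E_1^*$.

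The final task is to verify that $E_i^* R^{i-1} E_1^*$ is nonzero. Since $i \le d = \epsilon(x)$, the set $\G_i(x)$ is nonempty; pick any $z \in \G_i(x)$ and a shortest $xz$-path $x=x_0,x_1,\ldots,x_i=z$, and set $y:=x_1 \in \G(x)$. By construction $\partial(x,x_j)=j$ for every $j$, so the subwalk $x_1,x_2,\ldots,x_i$ is an $r^{i-1}$-walk from $y$ to $z$ with respect to $x$. Thus $r^{i-1}(y,z)\ge 1$, and hence $(E_i^* R^{i-1} E_1^*)_{zy}\ge 1$, proving non-vanishing. There is no real obstacle here; the only subtlety worth flagging is the $i=1$ edge case in the formula, which I would dispatch by the observation above.
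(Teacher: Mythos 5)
Your proposal is correct and follows essentially the same route as the paper, which simply refers back to the proof of Proposition \ref{ril}: use the diagonal structure of the dual idempotents to isolate the nonzero block, invoke Lemma \ref{walks}(i) to identify the entry with $r^{i-1}(y,z)$, and observe that a shortest $xz$-path yields $r^{i-1}(y,z)>0$ for some $y\in\G(x)$. Your explicit treatment of the $i=1$ edge case and of the nonvanishing claim just fills in details the paper leaves implicit.
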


\begin{proof}
	Similar to the proof of Proposition \ref{ril}. 
\end{proof}


\section{Linear dependency}
\label{sec:lin_dep}

With reference to Notation \ref{not1}, assume that $\Gamma$ has, up to isomorphism, exactly one irreducible $T$-module with endpoint $1$, and that this module is thin. In this section we show that certain matrices of $T$ are linearly dependent.

\bigskip 

\begin{theorem}
	\label{lin dep 2}
	With reference to Notation \ref{not1},  assume that $\Gamma$ has, up to isomorphism, exactly one irreducible $T$-module with endpoint $1$, and that this module is thin with diameter $d^{\prime}$. Pick matrices $F_1, F_2, F_3 \in T$ and an integer $i \; (1 \leq i \leq d)$.
	Then the following { (i), (ii)} hold:
	\begin{enumerate}[label={(\roman*)}]
		\item For every integer $i \; (1 \le i \le d^{\prime}+1)$ the matrices $E_i^*F_1E_1^*$, $E_i^*F_2E_1^*$ and $E_i^*F_3E_1^*$ are linearly dependent. 
		\item For every integer $i \; (d^{\prime}+1 < i \le d)$ the matrices $E_i^*F_1E_1^*$ and $E_i^*F_2E_1^*$ are linearly dependent. 
	\end{enumerate}
\end{theorem}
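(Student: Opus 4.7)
The approach is to show that, under the hypotheses, the subspace $\mathcal{S}_i := \{E_i^* B E_1^* \mid B \in T\}$ of $\MX$ has dimension at most $2$ when $1 \leq i \leq d'+1$ and at most $1$ when $d'+1 < i \leq d$. Both conclusions then follow at once, since three vectors in a two-dimensional space (resp.\ two vectors in a one-dimensional space) are necessarily linearly dependent. To set this up, I would first use the semisimplicity of $T$ to decompose the standard module as an orthogonal direct sum of irreducible $T$-modules. By the hypotheses of Notation \ref{not1} and of the theorem, this decomposition takes the form
$$V \;=\; T\widehat{x} \;\oplus\; W_1 \oplus \cdots \oplus W_m \;\oplus\; \bigoplus_{\ell} W'_{\ell},$$
where $T\widehat{x}$ is the trivial module, thin of diameter $d$; each $W_j$ is a copy of the unique thin irreducible $T$-module $U$ with endpoint $1$, of diameter $d'$; and every $W'_\ell$ has endpoint at least $2$, so that $E_1^* W'_\ell = 0$.

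Next I would exploit the fact that every $B \in T$ preserves each irreducible summand. Consequently $E_i^* B E_1^*$ annihilates each $W'_\ell$ and splits block-diagonally along the remaining summands. Since those summands are thin, $E_1^* W$ and $E_i^* W$ are each at most one-dimensional for $W \in \{T\widehat{x}, W_1, \ldots, W_m\}$, so the restriction of $E_i^* B E_1^*$ to $W$ is a scalar multiple of a fixed rank-one map between one-dimensional spaces. Fixing $T$-isomorphisms $\sigma_j : W_1 \to W_j$ for $1 \leq j \leq m$ and transporting unit basis vectors of $E_1^* W_1$ and $E_i^* W_1$ to $W_j$ via $\sigma_j$, and using that $\sigma_j$ intertwines the action of every $B \in T$, this scalar turns out to be the same on every copy $W_j$.

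Combining these observations, the linear map $B \mapsto E_i^* B E_1^*$ is completely determined by two scalar-valued linear functionals on $T$: one coming from the block $T\widehat{x}$, and one coming from the (aligned) blocks $W_1, \ldots, W_m$. The second functional is identically zero whenever $E_i^* U = 0$, i.e.\ whenever $i > d'+1$. Hence $\dim \mathcal{S}_i \leq 2$ in case (i) and $\dim \mathcal{S}_i \leq 1$ in case (ii), which gives the required linear dependence. The main delicate point in this plan is the normalization of bases across the $m$ isomorphic copies of $U$: one has to use the $T$-isomorphisms $\sigma_j$ to ensure that the scalar describing $E_i^* B E_1^*$ on $W_j$ is independent of $j$; once this alignment is in place the dimension count is routine.
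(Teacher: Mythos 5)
Your argument is correct and rests on the same ingredients as the paper's proof: the orthogonal decomposition of $V$ into irreducibles, thinness forcing each block of $E_i^*BE_1^*$ to be a scalar, and the $T$-isomorphisms between the endpoint-$1$ copies forcing that scalar to be the same on each copy — indeed the paper itself remarks after the theorem that the statement is equivalent to your dimension bounds $\dim E_i^*TE_1^*\le 2$ (resp.\ $\le 1$). The only presentational difference is that your dimension count handles (i) and (ii) uniformly and self-containedly, whereas the paper deduces (i) by citing the proof of \cite[Theorem 5.3]{FM2} and gives a direct vector-by-vector argument only for (ii).
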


\begin{proof} 
	Recall that $T\widehat{x}$ is thin and by \cite[Lemma 9]{FM3}, the subspace $\Es_1T\widehat{x}$ is spanned by the nonzero vector $R\widehat{x}$ and so, $\dim(\Es_1T\widehat{x})=1$.

	Let $W$ be a thin irreducible $T$-module with endpoint $1$ and diameter $d^{\prime}$. Firstly, we observe that $d^{\prime}+1\leq d$ and so, ${ (i)}$ immediately follows from \cite[Theorem 5.3]{FM2}. We would like to point out the same conclusions of \cite[Theorem 5.3]{FM2} are true without assuming that $\G$ is bipartite and distance-regular around $x$. Namely, in the proof of \cite[Theorem 5.3]{FM2}, the hypothesis that $\G$ is bipartite was never applied and local distance-regularity around $x$ was used to conclude that $\dim\left( \Es_1T\widehat{x}\right)=1$, which is also true in our case.

	 We now proceed to prove the second assertion. To do that,  pick an integer $i \; (d^{\prime}+1 < i \le d)$. We claim  there exist scalars $\lambda_1, \lambda_2$, not both zero, such that 
		$
		\lambda_1E_i^*F_1E_1^*v+\lambda_2E_i^*F_2E_1^*v=0
		$ 
		for every $v \in \Es_1T\widehat{x}$.	To see this, pick nonzero vectors $v_0 \in \Es_1T\widehat{x}$ and $v_1 \in E_1^*W$. Let $u_0$ be an arbitrary nonzero vector of $\Es_iT\widehat{x}$. As the trivial module is thin, there exist scalars $r_{0, 1}$, $r_{0, 2}$ such that
		\begin{equation} \label{lem eq21}
			E_i^*F_1E_1^*v_0= r_{0, 1} \, u_0 \qquad \hbox{and} \qquad E_i^*F_2E_1^*v_0= r_{0, 2} \, u_0.
		\end{equation}  
	It is clear that the linear equation $	r_{0,1} \; x_{1}  + r_{0,2} \; x_{2}=0$ with unknowns $x_1, x_2$ has a nontrivial solution, and so there exist scalars $\lambda_1, \lambda_2$, not both zero, such that 
		\begin{equation} \label{lem eq22}
			r_{0,1} \; \lambda_1  + r_{0,2} \; \lambda_{2}=0.
		\end{equation}
	
	Pick a vector $v  \in E_1^*T\widehat{x}$. Since the trivial $T$-module is thin, there exists a scalar $\lambda$ such that $v=\lambda v_0$. Therefore, by \eqref{lem eq21} and \eqref{lem eq22} we have
	\begin{eqnarray}
		\lambda_1{E_i^*F_1E_1^*v}+\lambda_2{E_i^*F_2E_1^*v}&=&\lambda\left(	\lambda_1{E_i^*F_1E_1^*v_0}+\lambda_2{E_i^*F_2E_1^*v_0} \right) \nonumber \\
		&=&\lambda\left(\lambda_1 \; r_{0,1}u_0 +\lambda_2 \; r_{0,2}u_0\right) \nonumber \\ &=&\lambda\left( 	r_{0,1} \; \lambda_1  + r_{0,2} \; \lambda_{2}\right) u_0=0  . \nonumber 
	\end{eqnarray}
	This proves our claim. Let $V_1$ denote the sum of all irreducible $T$-modules with endpoint $1$ and let $\{W^t \mid t \in \mathcal{I}\}$ be the set of all irreducible $T$-modules with endpoint $1$, where $\mathcal{I}$ is an index set. 	Pick a vector $v  \in E_1^*V_1$. Observe that $v$ can be written as a sum 
		\begin{equation}
			\label{eq:sum2}
			v = \sum_{t \in \mathcal{I}} v_t,
		\end{equation}
where $v_t \in \Es_1 W^t$ for every  $t \in \mathcal{I}$. Pick now a $T$-module $W^s$,  $s\in \mathcal{I}$. As any two irreducible $T$-modules with endpoint $1$ are isomorphic, it follows that $d^{\prime}\left( W^s\right)=d^{\prime}\left( W\right)=d^{\prime}$. So, we observe that in this case $\Es_{i}W^s$ is zero. In addition, for every $t \in \mathcal{I}$ there exists a $T$-isomorphism $\sigma_t: W^s \rightarrow W^t$. Let $w_t \in W^s$ be such that $v_t=\sigma_t(w_t)$. Then, we notice that for every $t \in \mathcal{I}$,  $$E_i^*F_jE_1^*v_t=E_i^*F_jE_1^*\sigma_t(w_t)=\sigma_t\left( E_i^*F_jE_1^*w_t\right)=0. $$
Hence, by \eqref{eq:sum2} we have that $E_i^*F_jE_1^*v=0$ for every $v  \in E_1^*V_1$.
		
To conclude the proof, pick now an arbitrary vector $w \in V$ and observe that $\Es_1 w = w_0 + w_1$ for some $w_0 \in T\widehat{x}$ and $w_1 \in V_1$.  It follows from the above comments that there exist scalars $\lambda_1, \lambda_2$, not both zero, such that 
	\begin{eqnarray}
	\lambda_1{E_i^*F_1E_1^*w}+\lambda_2{E_i^*F_2E_1^*w}&=&\lambda_1{E_i^*F_1E_1^*(w_0+w_1)}+\lambda_2{E_i^*F_2E_1^*(w_0+w_1)}=0. \nonumber  
\end{eqnarray}
		As $w$ was arbitrary, the result follows.
\end{proof}

Observe that the conclusion of Theorem \ref{lin dep 2} is equivalent to the fact that the dimension of $E_i^*TE_1^* \; (1 \leq i \leq d^{\prime}+1)$ is at most 2 and that the dimension of $E_i^*TE_1^* \; (d^{\prime}+1 < i \leq d)$ is at most 1.


\section{Algebraic condition implies combinatorial properties}
\label{sec:ac}

With reference to Notation \ref{not1}, assume that $\Gamma$ has, up to isomorphism, exactly one irreducible $T$-module with endpoint $1$, and that this module is thin. In this section we prove that in this case combinatorial conditions ${(a), (b)}$ described in part ${(ii)}$ of Theorem \ref{thm:main} hold. 

\begin{lemma}
	\label{lemma:lincomb}
	With reference to Notation \ref{not1}, assume that $\Gamma$ has, up to isomorphism, exactly one irreducible $T$-module with endpoint $1$, and that this module is thin. Then for every $i \ (1 \leq i \leq d)$ there exist scalars $\kappa_{i}, \mu_{i}, \theta_{i}, \rho_{i}$, such that 
	\begin{eqnarray} 
		\label{equation1:lin_dep}
		\Es_{i} LR^{i} \Es_1 &=&\kappa_i\Es_{i} R^{i-1} \Es_1+\mu_i\Es_{i} R^iL \Es_1, \\
		\label{equation2:lin_dep}
		\Es_{i} FR^{i-1} \Es_1 &=&\theta_i\Es_{i} R^{i-1} \Es_1+\rho_i\Es_{i} R^iL \Es_1.
	\end{eqnarray}
\end{lemma}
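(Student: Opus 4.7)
The main input is the dimension bound from Theorem \ref{lin dep 2}, which gives $\dim E_i^*TE_1^* \le 2$ for $1 \le i \le d'+1$ and $\dim E_i^*TE_1^* \le 1$ for $d'+1 < i \le d$. By Propositions \ref{ril} and \ref{ri-1}, both $E_i^*R^{i-1}E_1^*$ and $E_i^*R^iLE_1^*$ are nonzero elements of $E_i^*TE_1^*$ for every $1 \le i \le d$. My strategy is to show that whenever $1 \le i \le d'+1$ these two matrices are \emph{linearly independent}, so that they span the at-most-$2$-dimensional space $E_i^*TE_1^*$; the range $d'+1 < i \le d$ will then be handled immediately from the $1$-dimensional case.

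For the linear-independence step I invoke the unique (up to isomorphism) thin irreducible $T$-module $W$ with endpoint $1$ guaranteed by hypothesis. Fix a basis vector $w_k$ of each one-dimensional subspace $E_k^*W$ for $1 \le k \le 1+d'$. Since $W$ has endpoint $1$, $Lw_1 \in E_0^*W = 0$, which yields $E_i^*R^iLE_1^*w_1 = 0$. I then show that $R^{i-1}w_1 \ne 0$ whenever $1 \le i \le d'+1$: if $Rw_j = 0$ for some $1 \le j \le d'$, then $\spanned\{w_1, \dots, w_j\}$ is closed under $A = L+F+R$ and under each $E_k^*$, hence is a proper nonzero $T$-submodule of $W$, contradicting irreducibility. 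Therefore each $Rw_j$ with $1 \le j \le d'$ is a nonzero scalar multiple of $w_{j+1}$, so $R^{i-1}w_1$ is a nonzero multiple of $w_i$, and $E_i^*R^{i-1}E_1^*w_1 = R^{i-1}w_1 \ne 0$. Any relation $\alpha E_i^*R^{i-1}E_1^* + \beta E_i^*R^iLE_1^* = 0$ evaluated at $w_1$ then forces $\alpha = 0$, after which $\beta = 0$ follows because $E_i^*R^iLE_1^* \ne 0$. This linear-independence argument, in which irreducibility and thinness of $W$ enter in an essential way (the trick being that $L$ kills $E_1^*W$ while iterated $R$ does not), is what I expect to be the main obstacle.

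To conclude: for $1 \le i \le d'+1$, the linearly independent pair $\{E_i^*R^{i-1}E_1^*, E_i^*R^iLE_1^*\}$ spans the at-most-$2$-dimensional space $E_i^*TE_1^*$, so the matrices $E_i^*LR^iE_1^*$ and $E_i^*FR^{i-1}E_1^*$, both of which lie in $E_i^*TE_1^*$, are $\CC$-linear combinations of this pair, supplying scalars $\kappa_i, \mu_i, \theta_i, \rho_i$ that satisfy \eqref{equation1:lin_dep} and \eqref{equation2:lin_dep}. For $d'+1 < i \le d$ the space $E_i^*TE_1^*$ has dimension at most one and is spanned by the nonzero matrix $E_i^*R^{i-1}E_1^*$; setting $\mu_i = \rho_i = 0$ and letting $\kappa_i, \theta_i$ be the corresponding proportionality constants of $E_i^*LR^iE_1^*, E_i^*FR^{i-1}E_1^*$ with respect to $E_i^*R^{i-1}E_1^*$ yields the required identities, completing the proof.
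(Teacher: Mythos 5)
Your proof is correct, and it takes a genuinely different route from the paper's. The paper starts from the two three-term dependence relations supplied by Theorem \ref{lin dep 2}(i), namely $\alpha_1^{(i)} E_i^*LR^{i}E_1^* + \alpha_2^{(i)} E_i^* R^{i-1} E_1^* + \alpha_3^{(i)} E_i^* R^iL E_1^*=0$ and its analogue with $FR^{i-1}$, and then splits into cases according to whether the leading coefficients $\alpha_1^{(i)},\beta_1^{(i)}$ vanish: when they are nonzero one solves directly, and in the degenerate case the paper takes the least $k$ with $\alpha_1^{(k)}\beta_1^{(k)}=0$, deduces $R^{k-1}w=0$ for $w\in E_1^*W$, builds the submodule $W'=\spanned\{R^jw\}$, concludes $W'=W$ and $E_i^*W=0$, hence $d'+1<i$, and finishes with Theorem \ref{lin dep 2}(ii). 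You instead work with the dimension reformulation of Theorem \ref{lin dep 2} (which the paper itself records right after that theorem) and prove directly that $\{E_i^*R^{i-1}E_1^*, E_i^*R^iLE_1^*\}$ is linearly \emph{independent} for $1\le i\le d'+1$ by evaluating on $w_1\in E_1^*W$, exploiting $Lw_1=0$ versus $R^{i-1}w_1\ne 0$; the latter nonvanishing is justified by the same kind of submodule construction the paper uses in its degenerate case, but deployed once to show $R$ acts injectively on $E_j^*W$ for $j\le d'$ rather than to rule a case out. Your version buys a cleaner logical structure — a basis of $E_i^*TE_1^*$ is exhibited and everything else is expanded in it, with no case analysis on coefficients — while the paper's version has the minor advantage of never needing to argue that the two spanning matrices are independent, only that certain ones are nonzero. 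All the ingredients you rely on (Theorem \ref{lin dep 2}, Propositions \ref{ril} and \ref{ri-1}, thinness of $W$, and $E_j^*W\ne 0$ exactly for $1\le j\le 1+d'$) are available at this point in the paper, so the argument stands as written.
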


\begin{proof}
	Pick $i \; (1 \leq i \leq d)$ and observe that by Definition \ref{def2}, the matrices $LR^{i}$, $R^{i-1}$, $FR^{i-1}$ and $R^iL$ are elements of algebra $T$. Consequently, by Theorem \ref{lin dep 2}, there exist scalars $\alpha_j^{(i)}$ $(1 \leq j \leq 3)$, not all zero, and $\beta_j^{(i)}$ $(1 \leq j \leq 3)$, not all zero, such that
	\begin{eqnarray}
		\label{eqlem1} \alpha_1^{(i)} E_i^*LR^{i}E_1^* + \alpha_2^{(i)} E_i^* R^{i-1} E_1^* + \alpha_3^{(i)} E_i^* R^iL E_1^*=0, \\ 
		\label{eqlem2}	\beta_1^{(i)} E_i^*FR^{i-1}E_1^* + \beta_2^{(i)} E_i^* R^{i-1} E_1^* + \beta_3^{(i)} E_i^* R^iL E_1^*=0.
	\end{eqnarray}
	Assume for the moment that $\alpha_1^{(i)} \beta_1^{(i)}\ne 0$. Then \eqref{equation1:lin_dep} and \eqref{equation2:lin_dep} hold with $\kappa_i = -\alpha_2^{(i)}/\alpha_1^{(i)}$, $\mu_i=-\alpha_3^{(i)}/\alpha_1^{(i)}$, $\theta_i = -\beta_2^{(i)}/\beta_1^{(i)}$, and $\rho_i=-\beta_3^{(i)}/\beta_1^{(i)}$. 
	
	Now, assume that $\alpha_1^{(i)}\beta_1^{(i)}=0$. Let $W$ denote an irreducible $T$-module with endpoint $1$. Let $k$ denote the least integer such that $\alpha_1^{(k)}\beta_1^{(k)}=0$. We observe $k\leq i$. Assume for a moment that $k=1$. Without loss of generality assume that $\alpha_1^{(1)}=0$. Pick $y,z \in \Gamma(x)$, $y \ne z$. As the $(z,y)$-entries of $E^*_1$ and $E^*_1 RL E^*_1$ are $0$ and $1$ respectively, \eqref{eqlem1} implies that $\alpha_3^{(1)}=0$. As $E^*_1$ is nonzero, we get that $\alpha_2^{(1)}=0$ as well, a contradiction. Therefore, $k \ge 2$. Pick a nonzero vector $w\in \Es_1W$ and let $W'$ denote the vector subspace of $V$ spanned by the vectors $R^iw \; (0 \leq i \leq d)$. Note $W'$ is nonzero and $W' \subseteq W$. Observe also that by \eqref{eq:LRaction} and  by (eiv) from Section \ref{sec:prelim}, the subspace $W'$ is invariant under the action of the dual idempotents.  	Since $\alpha_1^{(k)}\beta_1^{(k)}=0$ and by Proposition \ref{ril} the matrix $\Es_{k}R^kL\Es_1$ is nonzero, it follows from \eqref{eqlem1} and \eqref{eqlem2} that there exists $\gamma \in \mathbb{C}$ such that $\Es_{i}R^{k-1}\Es_1=\gamma \Es_{k}R^kL\Es_1$. Now, from \eqref{eq:LRaction} we notice that $Lw=0$ and so, $R^{k-1}w=0$. This implies $FR^{j}w=LR^{j}w=R^jw=0$ for $k-1\leq j \leq d$. Therefore, by construction and by \eqref{eq:LRaction}, it is also clear that $W'$ is closed under the action of $R$. 
 Moreover, for every $1\leq j \leq k-1$ the scalar $\alpha_1^{(j)}\beta_1^{(j)}$ is nonzero. Therefore, from \eqref{eqlem1} and \eqref{eqlem2}, we have that \eqref{equation1:lin_dep} and \eqref{equation2:lin_dep} hold for $1\leq j \leq k-1$ with $\kappa_j = -\alpha_2^{(j)}/\alpha_1^{(j)}$, $\mu_j=-\alpha_3^{(j)}/\alpha_1^{(j)}$, $\theta_j = -\beta_2^{(j)}/\beta_1^{(j)}$, and $\rho_j=-\beta_3^{(j)}/\beta_1^{(j)}$. So, $LR^jw=\kappa_jR^{j-1}w$ and $FR^{j-1}w=\theta_jR^{j-1}w$ for  $1\leq j \leq k-1$. This implies that $W'$ is invariant under the action of $L$ and $F$. Since $A=L+F+R$, it turns out that $W'$ is $A$-invariant as well. Recall that algebra $T$ is generated by $A$ and the dual idempotents. Therefore, $W'$ is a $T$-module and $W'=W$ as $W$ is irreducible.  Notice that by construction and \eqref{eq:LRaction}, the subspace $\Es_i W$ is generated by $R^{i-1} w$. This shows $\Es_{i}W=0$ since $k\leq i$. We thus have $d^{\prime}+1<i\leq d$ where $d^{\prime}$ denotes the diameter of $W$. Hence, by Theorem \ref{lin dep 2}${ (ii)}$, any two matrices in $E_i^*TE_1^*$ are linearly dependent. Consequently, there exist scalars $\alpha, \beta$ (not both zero) and $\alpha^{\prime}, \beta^{\prime}$ (not both zero), such that 
		\begin{eqnarray}
		\label{eqlem1*} \alpha E_i^*LR^{i}E_1^* + \beta E_i^* R^{i-1} E_1^*=0, \\ 
		\label{eqlem2*}	\alpha^{\prime} E_i^*FR^{i-1}E_1^* + \beta^{\prime} E_i^* R^{i-1} E_1^*=0.
	\end{eqnarray}
If $\alpha$ ($\alpha^{\prime}$, respectively) is zero, then $\beta$ ($\beta^{\prime}$, respectively) is also zero by Proposition \ref{ri-1}, a contradiction. This shows that  $E_i^*LR^{i}E_1^*= -\frac{\beta}{\alpha} E_i^* R^{i-1} E_1^*$ and $E_i^*FR^{i-1}E_1^*= -\frac{\beta'}{\alpha'} E_i^* R^{i-1} E_1^*$. Similarly we show that 
$E_i^* R^iL E_1^*= \lambda E_i^* R^{i-1} E_1^*$ for some nonzero scalar $\lambda \in \mathbb{C}$. It is now clear that \eqref{equation1:lin_dep} and \eqref{equation2:lin_dep} hold for any $\kappa_i, \mu_i$, $\theta_i, \rho_i$ satisfying $ \kappa_i + \lambda \mu_i=-\beta/\alpha$ and $\theta_i + \lambda \rho_i=-\beta^{\prime}/\alpha^{\prime}$. This finishes the proof.
\end{proof}

We are now ready to prove the main result of this section. 

\begin{theorem}\label{Alg implies Com}
	With reference to Notation \ref{not1}, assume that $\Gamma$ has, up to isomorphism, exactly one irreducible $T$-module with endpoint $1$, and that this module is thin.  For every integer $i$ $\; (1\leq i \leq d)$ there exist scalars $\kappa_i, \mu_i$, $\theta_i, \rho_i$, such that for every $y \in \G(x)$ the following { (a), (b)} hold:
		\begin{enumerate}[label={(\alph*)}]
		\item For every $z \in D^{i}_{i+1}(x,y) \cup D^{i}_{i}(x,y)$ we have  
		\begin{eqnarray}
			r^{i}\ell(y, z) &=& \mu_i \; \ell r^i (y, z),  \nonumber \\
			r^{i-1}f(y, z) &=& \rho_i \; \ell r^i (y, z). \nonumber
		\end{eqnarray}
		\item For every $z \in D^i_{i-1}(x,y)$ we have  
		\begin{eqnarray}
			r^{i}\ell(y, z) &=& \kappa_i \; r^{i-1}( y, z)+\mu_i \; \ell r^i (y, z),  \nonumber \\
			r^{i-1}f(y, z) &=& \theta_i \; r^{i-1}( y, z)+\rho_i \; \ell r^i (y, z). \nonumber
		\end{eqnarray}
	\end{enumerate}
	Moreover, $\rho_i=0$ if the set  $D^i_{i+1}(x,y)$ is nonempty for some $y\in \G(x)$.  
\end{theorem}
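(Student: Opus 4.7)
The plan is to simply unpack the two matrix identities supplied by Lemma \ref{lemma:lincomb} entry-by-entry, using Lemma \ref{walks} and Propositions \ref{ril}, \ref{ri-1} as a dictionary between matrix products and walks of prescribed shape. Concretely, first apply Lemma \ref{lemma:lincomb} to obtain, for each $1 \leq i \leq d$, scalars $\kappa_i, \mu_i, \theta_i, \rho_i$ such that
\begin{eqnarray}
E_i^* L R^i E_1^* &=& \kappa_i E_i^* R^{i-1} E_1^* + \mu_i E_i^* R^i L E_1^*, \nonumber \\
E_i^* F R^{i-1} E_1^* &=& \theta_i E_i^* R^{i-1} E_1^* + \rho_i E_i^* R^i L E_1^*. \nonumber
\end{eqnarray}
Then fix $y \in \Gamma(x)$ and $z \in \Gamma_i(x)$, read off the $(z,y)$-entry of each side, and invoke Lemma \ref{walks}(ii),(iv) together with Propositions \ref{ril} and \ref{ri-1} to rewrite the entries as $r^i\ell(y,z)$, $r^{i-1}f(y,z)$, $r^{i-1}(y,z)$ and $\ell r^i(y,z)$ respectively. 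This produces the two identities of part (b) for every $z \in \Gamma_i(x)$, without yet distinguishing the cell of the distance partition to which $z$ belongs.

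Next, I would case-split on the decomposition $\Gamma_i(x) = D_{i-1}^i(x,y) \cup D_i^i(x,y) \cup D_{i+1}^i(x,y)$ coming from the triangle inequality (Definition \ref{def:D}). The key observation is that $r^{i-1}(y,z)$ counts walks of length exactly $i-1$ starting at $y$, so it vanishes whenever $\partial(y,z) > i-1$. In particular $r^{i-1}(y,z) = 0$ for every $z \in D_i^i(x,y) \cup D_{i+1}^i(x,y)$, which collapses the two identities to the form asserted in part (a). For $z \in D_{i-1}^i(x,y)$ the term $r^{i-1}(y,z)$ survives and one obtains part (b) verbatim. The same scalars work for every $y \in \Gamma(x)$ since they are extracted once and for all from the matrix identities, which are independent of $y$.

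For the \emph{Moreover} clause, suppose $z \in D_{i+1}^i(x,y)$ for some $y \in \Gamma(x)$. Any walk of shape $r^{i-1}f$ has length $i$, but $\partial(y,z) = i+1$, so $r^{i-1}f(y,z) = 0$; and $r^{i-1}(y,z) = 0$ by the above. On the other hand a walk of shape $\ell r^i$ from $y$ is forced to step down to $x$ first and then trace a shortest $xz$-path, giving $\ell r^i(y,z) = r^i(x,z) > 0$ since $\partial(x,z) = i$. Substituting into the second identity of part (a) yields $0 = \rho_i \cdot \ell r^i(y,z)$, whence $\rho_i = 0$, as required.

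The argument is really a translation exercise: Lemma \ref{lemma:lincomb} has already done the representation-theoretic work, so no further obstacle is expected beyond careful bookkeeping of which walk shape corresponds to which product of lowering, flat and raising matrices, and verifying the vanishing of $r^{i-1}(y,z)$ in the two distance cells where the identity simplifies.
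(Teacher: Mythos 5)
Your proposal is correct and follows essentially the same route as the paper: extract the two matrix identities from Lemma \ref{lemma:lincomb}, read off $(z,y)$-entries via Lemma \ref{walks} and Propositions \ref{ril}, \ref{ri-1}, and note that $r^{i-1}(y,z)$ vanishes on $D^i_i(x,y)\cup D^i_{i+1}(x,y)$ while $r^{i-1}f(y,z)=0$ and $\ell r^i(y,z)>0$ on $D^i_{i+1}(x,y)$ force $\rho_i=0$. The only (immaterial) difference is that you state the identities uniformly over $\Gamma_i(x)$ before specializing, whereas the paper case-splits from the outset.
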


\begin{proof}
	Pick an integer $i \; (1 \le i \le d)$ and recall that by Lemma \ref{lemma:lincomb} equations \eqref{equation1:lin_dep} and \eqref{equation2:lin_dep} hold. Pick $y \in \G(x)$. 
	
		${(a)}$ Pick $z \in D^i_{i+1}(x,y) \cup D^{i}_{i}(x,y)$  and observe that by Lemma \ref{walks} the $(z,y)$-entry of the left-hand side of \eqref{equation1:lin_dep} (\eqref{equation2:lin_dep}, respectively) equals $r^{i}\ell( y, z)$ ($r^{i-1}f(y, z)$, respectively). On the other hand, again by Lemma \ref{walks}, the $(z,y)$-entry of $E_i^* R^{i-1} E_1^*$ ($E_i^* R^iL E_1^*$, respectively) equals $0$ ($\ell r^{i}( y, z)$, respectively). Therefore, the $(z,y)$-entry of the right-hand side of \eqref{equation1:lin_dep} (\eqref{equation2:lin_dep}, respectively) equals  $\mu_i\; \ell r^{i}( y, z)$  ($\rho_i \; \ell r^{i}( y, z)$, respectively). 
		
		${(b)}$ Pick now $z \in D^i_{i-1}(x,y)$  and observe that by Lemma \ref{walks} the $(z,y)$-entry of the left-hand side of \eqref{equation1:lin_dep} (\eqref{equation2:lin_dep}, respectively) equals $r^{i}\ell( y, z)$  ($r^{i-1}f( y, z)$, respectively). On the other hand, again by Lemma \ref{walks}, the $(z,y)$-entry of $E_i^* R^{i-1} E_1^*$ ($E_i^* R^iL E_1^*$, respectively) equals $r^{i-1}( y, z)$ ($\ell r^{i}( y, z)$, respectively). Therefore, the $(z,y)$-entry of the right-hand side of \eqref{equation1:lin_dep} (\eqref{equation2:lin_dep}, respectively) equals $\kappa_i \; r^{i-1}(y, z) + \mu_i \; \ell r^{i}( y, z)$ ($\theta_i \; r^{i-1}( y, z)+\rho_i \; \ell r^{i}( y, z)$, respectively). 
		
Moreover, for $z \in  D^i_{i+1}(x,y)$ we observe there is no $yz$-walk of the shape $r^{i-1}f$ and so $\rho_i=0$  if the set  $D^i_{i+1}(x,y)$ is nonempty for some $y\in \G(x)$ as $\ell r^{i}( y, z)>0$. The result follows.
\end{proof}



\section{The distance partition}\label{sec:comments}

Throughout this section let $\G=(X,\R)$ denote a connected graph. Let $x \in X$ and let $T=T(x)$. Suppose that the unique irreducible $T$-module with endpoint $0$ is thin. Assume that $\G$ has, up to isomorphism, exactly one irreducible $T$-module with endpoint $1$, which is thin.  In this section we have some comments about the combinatorial structure of the intersection diagrams of $\G$ with respect to the edge $\{x,y\}$, for every $y\in \G(x)$. In particular, we will discuss which of the sets $D^i_{j}(x,y)$ are (non)empty. 

\begin{lemma}\label{5nonzero1}
	With reference to Notation \ref{not1}, the set $D^i_{i-1}(x,y)$ is nonempty for every $i$ $\; (1 \leq i \leq d)$ and for all $y\in \G(x)$. 
\end{lemma}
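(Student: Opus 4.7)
The plan is to exhibit a nonzero vector in $\Es_i V$ whose support is contained in $D^i_{i-1}(x,y)$; the natural candidate is $R^{i-1}\widehat{y}$, which lies in $\Es_i V$ by \eqref{eq:LRaction}.

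First, I would pin down the support of $R^{i-1}\widehat{y}$ via Lemma \ref{walks}(i): its $z$-coordinate equals $r^{i-1}(y,z)$, the number of $yz$-walks of shape $r^{i-1}$ with respect to $x$. Any such walk has length $i-1$, begins at $y \in \G(x)$, and each of its $i-1$ steps strictly increases the distance to $x$; it therefore ends in $\G_i(x)$ and satisfies $\partial(y,z)\le i-1$. The triangle inequality $\partial(y,z)\ge \partial(x,z)-\partial(x,y)=i-1$ forces $\partial(y,z)=i-1$, placing $z$ in $D^i_{i-1}(x,y)$.

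Next, to prove $R^{i-1}\widehat{y}\ne 0$, I would use the orthogonal decomposition $V=T\widehat{x}\oplus (T\widehat{x})^\perp$ into $T$-invariant subspaces (available since $T$ is semisimple and closed under conjugate-transpose). Writing $\widehat{y}=v_0+v'$ accordingly, the facts that $\widehat{y}\in \Es_1 V$ and that $\Es_1$ commutes with the orthogonal projection onto $T\widehat{x}$ give $v_0\in \Es_1 T\widehat{x}$. By the trivial-module thinness and \cite[Lemma 9]{FM3}, $\Es_1 T\widehat{x}$ is one-dimensional, spanned by $R\widehat{x}$; a direct computation $\langle\widehat{y},R\widehat{x}\rangle=(R\widehat{x})_y=1$ (since $y\in\G(x)$) then identifies $v_0=R\widehat{x}/\|R\widehat{x}\|^2\ne 0$. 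Applying $R^{i-1}$ and using $T$-invariance of the summands yields
\[
R^{i-1}\widehat{y}=\frac{1}{\|R\widehat{x}\|^2}\,R^{i}\widehat{x}+R^{i-1}v',
\]
with the two terms in the orthogonal subspaces $T\widehat{x}$ and $(T\widehat{x})^\perp$ respectively, so no cancellation is possible. Finally, $R^{i}\widehat{x}\ne 0$ for $0\le i\le d$, because Lemma \ref{walks}(i) gives $(R^{i}\widehat{x})_z=r^{i}(x,z)$, and any shortest $xz$-path with $z\in\G_i(x)\ne\emptyset$ is an $r^{i}$-walk.

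The main delicate point is showing that the $T\widehat{x}$-component $v_0$ of $\widehat{y}$ is nonzero; here the pseudo-distance-regularity hypothesis (i.e.\ thinness of $T\widehat{x}$) is used crucially to reduce $\Es_1 T\widehat{x}$ to a line, making the explicit projection possible. Note that the endpoint-$1$ assumption of the section is not actually needed for this lemma.
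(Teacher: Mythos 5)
Your proof is correct, and it takes a genuinely different route from the paper's. The paper argues by contradiction: assuming $D^i_{i-1}(x,y)=\emptyset$, it takes the greatest $k$ with $D^k_{k-1}(x,y)\neq\emptyset$, produces a vertex $z\in D^k_{k+1}\cup D^k_k$ with $r^{k+1}\ell(z)>0$ and a vertex $u\in D^k_{k-1}$ with $r^{k+1}\ell(u)=0$ while $r^k(z),r^k(u)>0$, and derives a contradiction with the combinatorial characterization of thinness of $T\widehat{x}$ in \cite[Theorem 6]{FM3}. You instead argue directly and module-theoretically: the support of $R^{i-1}\widehat{y}$ is contained in $D^i_{i-1}(x,y)$ by the triangle-inequality computation, and the vector is nonzero because the orthogonal projection of $\widehat{y}$ onto $T\widehat{x}$ equals $R\widehat{x}/\lVert R\widehat{x}\rVert^2$ (using $\dim \Es_1 T\widehat{x}=1$, i.e.\ \cite[Lemma 9]{FM3}, which the paper already invokes elsewhere), so $R^{i-1}\widehat{y}$ has the nonzero component $R^{i}\widehat{x}/\lVert R\widehat{x}\rVert^2$ in $T\widehat{x}$, with no possible cancellation against the component in $(T\widehat{x})^{\perp}$. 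All the individual steps check out: the identification of $v_0$ via $\langle\widehat{y},R\widehat{x}\rangle=1$, the $T$-invariance of both summands, and $R^i\widehat{x}\neq 0$ for $i\le d$. What each approach buys: the paper's stays purely combinatorial and needs only the walk-counting characterization; yours is direct rather than by contradiction, yields slightly more information (an explicit witness $z$ reachable from $y$ by an $r^{i-1}$-walk, and the formula for the projection of $\widehat{y}$ onto the trivial module), and makes transparent exactly where thinness of $T\widehat{x}$ enters. Your closing observation that the endpoint-$1$ hypothesis is not needed is consistent with the paper, which indeed states this lemma under Notation \ref{not1} alone.
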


\begin{proof}
	Suppose there exist $i$ $\; (1 \leq i \leq d)$ and $y\in \G(x)$ such that the set $D^i_{i-1}(x,y)$ is empty. Since $D^1_0(x,y)=\left\lbrace y \right\rbrace $ we observe that $i\geq 2$. Moreover, we notice that $D_{i+1}^i(x,y) \ne \emptyset$ or $D_{i}^i(x,y) \ne \emptyset$, as otherwise, the set  $\G_i(x) = D^i_{i+1}(x,y) \cup  D^i_{i}(x,y) \cup D^i_{i-1}(x,y)$ is empty, contradicting that the eccentricity of $x$ equals $d$. Let $k$ be the greatest integer such that $D^k_{k-1}(x,y) \ne \emptyset$. Note that $1\leq k \leq i-1$. Since the set  $D_{i+1}^i(x,y)\cup D_{i}^i(x,y) \neq \emptyset$ then it is easy to see that there exists an $xw$-path for $w\in D_{i+1}^i(x,y)\cup D_{i}^i(x,y)$, passing through a vertex $z \in D_{k+1}^k(x,y)\cup D_{k}^k(x,y)$. So, the numbers $r^{k+1}\ell(z)>0$ and $r^{k}(z)>0$. Moreover, for $u\in D^k_{k-1}(x,y)$ we observe $r^{k+1}\ell(u)=0$ and $r^{k}(u)>0$. As the trivial module is thin, this contradicts \cite[Theorem 6]{FM3}. The result follows.   
\end{proof}

The proofs of the next results are ommited as it can be carried out using similar ideas as the proofs of \cite[Lemma 7.1]{BF}, \cite[Proposition 7.2]{BF} and \cite[Proposition 7.3]{BF}, respectively.

\begin{lemma}\label{Dii zero for all y and i} 
	With reference to Notation \ref{not1}, assume that $\Gamma$ has, up to isomorphism, exactly one irreducible $T$-module with endpoint $1$, and that this module is thin. Pick an integer $i$ $\; (1 \leq i \leq d)$ and assume for some $y\in \G(x)$, the set  $D^i_{i+1}(x,y) \ne \emptyset$. Then the set $D^j_{j}(x,y)$ is empty for every $j$ $\; (1 \leq j \leq i)$ and for all $y\in \G(x)$. 
\end{lemma}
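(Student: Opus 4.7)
The plan is to extract from the hypothesis the vanishing of all the scalars $\rho_k$ for $1\le k\le i$, and then to argue by contradiction, combining Theorem~\ref{Alg implies Com} with a combinatorial analysis of shortest walks. The argument follows the strategy of \cite[Proposition~7.2]{BF}, adapted to the pseudo-distance-regular setting.

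First, I would pick $z\in D^i_{i+1}(x,y)$ guaranteed by the hypothesis and any shortest $x$-to-$z$ path $x=x_0,x_1,\ldots,x_i=z$. Since consecutive distances to $y$ differ by at most one along a path, and $\partial(y,x_0)=1$, $\partial(y,x_i)=i+1$, one is forced to have $\partial(y,x_k)=k+1$ for every $0\le k\le i$. Hence $x_k\in D^k_{k+1}(x,y)$ for each $1\le k\le i$, and the moreover clause of Theorem~\ref{Alg implies Com} delivers $\rho_k=0$ for all such $k$.

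Next I would suppose, for a contradiction, that $D^j_j(x,y')\ne\emptyset$ for some $1\le j\le i$ and some $y'\in\Gamma(x)$, and pick $z'\in D^j_j(x,y')$. Any shortest $y'$-to-$z'$ walk has length $j$; using the balance equations $r-\ell=j-1$ and $r+\ell+f=j$ for the numbers of $r$-, $\ell$- and $f$-steps, one sees $\ell=0$, $f=1$, $r=j-1$. Hence the walk has shape $r^{m-1}fr^{j-m}$ for some $1\le m\le j$, and the $f$-step joins two adjacent vertices $w,w'\in\Gamma_m(x)$. The initial $r^{m-1}$-segment gives $r^{m-1}(y',w)\ge 1$, so appending the $f$-step $w\to w'$ yields $r^{m-1}f(y',w')\ge 1$.

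Finally, I would split on $m$. If $m=j$, then $w'=z'\in D^j_j(x,y')=D^m_m(x,y')$, and Theorem~\ref{Alg implies Com}(a) applied at index $m$ with $\rho_m=0$ forces $r^{m-1}f(y',w')=0$, a contradiction. If $m<j$, the same theorem forces $w'\notin D^m_m(x,y')\cup D^m_{m+1}(x,y')$, so $\partial(y',w')=m-1$; concatenating a $y'$-to-$w'$ path of length $m-1$ with the tail $w'\to y_{m+1}\to\cdots\to z'$ of shape $r^{j-m}$ produces a $y'$-to-$z'$ walk of length $j-1$, contradicting $\partial(y',z')=j$. I expect the main obstacle to be the step-count bookkeeping that pins down the shape of a shortest $y'$-to-$z'$ walk and the separate handling of the boundary case $m=j$; once these are in hand, the contradictions are immediate from Theorem~\ref{Alg implies Com}.
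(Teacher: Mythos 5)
Your proof is correct, and it takes the route the paper itself indicates: the paper omits the proof of this lemma, deferring to the analogous argument in \cite[Lemma 7.1]{BF}, and your write-up is precisely that argument adapted to the present setting via Theorem \ref{Alg implies Com}. The two key points both check out: (1) walking a geodesic from $x$ to a vertex of $D^i_{i+1}(x,y)$ forces $D^k_{k+1}(x,y)\neq\emptyset$ for all $k\le i$, hence $\rho_k=0$ for all $k\le i$ by the moreover clause (and, crucially, these scalars are uniform over all neighbours of $x$, which is what lets you transfer the conclusion to an arbitrary $y'\in\G(x)$); and (2) the shape analysis $\ell=0$, $f=1$, $r=j-1$ of a geodesic from $y'$ to a putative vertex of $D^j_j(x,y')$, together with $r^{m-1}f(y',w')\ge 1$ and $\rho_m=0$, yields the required contradiction in both the $m=j$ and $m<j$ cases. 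The only blemish is the typo $y_{m+1}$ for the walk vertex $u_{m+1}$ in the final case.
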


The above lemma together with the fact that the set $D^0_1(x,y)$ is nonempty for every $y\in \G(x)$ motivate the next result. 

\begin{proposition}\label{t(y)}
	With reference to Notation \ref{not1}, assume that $\Gamma$ has, up to isomorphism, exactly one irreducible $T$-module with endpoint $1$, and that this module is thin. Pick $y \in \G(x)$ and let $D^i_j=D^i_j(x,y)$. Then there exists an integer $t:=t(y) \; (0\leq t \leq d)$ such that the following ${ (i),(ii)}$ hold: 
	\begin{enumerate}[label={(\roman*)}]
		\item
		For every $i \; (0 \leq i \leq t)$ the set $D^i_{i+1}$ is nonempty and the set $D^i_i(x,z)$ is empty for every $z\in \G(x)$. 
		\item For every $i \; (t < i \leq d)$ the set $D^i_{i+1}$ is empty. 
	\end{enumerate}
	Moreover, $\G_i(x)=D^i_{i+1} \cup D^i_{i-1}$ for every $0 \leq i \leq t$. 
\end{proposition}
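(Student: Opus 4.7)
The plan is to define $t = t(y)$ directly as the largest integer in $[0,d]$ for which $D^{t}_{t+1}(x,y) \ne \emptyset$. This is well defined, because $D^{0}_{1}(x,y) = \{x\} \cap \G_1(y) = \{x\}$ is nonempty (since $y \in \G(x)$ forces $x \in \G_1(y)$). Part (ii) is then immediate: for $t < i \le d$, the set $D^{i}_{i+1}(x,y)$ is empty by the maximality of $t$.

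For Part (i), I would split the conclusion into its two requirements. The emptiness of $D^{i}_{i}(x,z)$ for every $1 \le i \le t$ and every $z \in \G(x)$ is handed to us by Lemma \ref{Dii zero for all y and i} applied with index $t$ (whose hypothesis $D^{t}_{t+1}(x,y) \ne \emptyset$ holds by construction). The boundary case $i = 0$ is trivial, since $D^{0}_{0}(x,z) = \{x\} \cap \{z\} = \emptyset$ whenever $z \in \G(x)$ (as the graph is simple and loopless).

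The substantive step is to show that $D^{i}_{i+1}(x,y) \ne \emptyset$ for every $0 \le i \le t$. I would argue by a single shortest-path/triangle-inequality computation: pick any $w \in D^{t}_{t+1}(x,y)$ and any shortest $xw$-path $x = z_0, z_1, \ldots, z_t = w$ in $\G$. By construction $z_i \in \G_i(x)$, and the triangle inequality yields
$$
\partial(z_i, y) \;\ge\; \partial(w, y) - \partial(z_i, w) \;=\; (t+1) - (t - i) \;=\; i+1,
$$
while $\partial(z_i, y) \le \partial(z_i, x) + \partial(x, y) = i+1$. Hence $\partial(z_i, y) = i+1$, so $z_i \in \G_i(x) \cap \G_{i+1}(y) = D^{i}_{i+1}(x,y)$, giving nonemptiness for every $0 \le i \le t$.

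For the ``moreover'' part, I would invoke the standard trichotomy $\G_i(x) = D^{i}_{i-1}(x,y) \cup D^{i}_{i}(x,y) \cup D^{i}_{i+1}(x,y)$ from Definition \ref{def:D} (a consequence of the triangle inequality applied to the edge $\{x,y\}$), and then eliminate the middle term using the $z = y$ case of what has already been proved. The main obstacle to the plan is the backward-propagation claim that nonemptiness of $D^{t}_{t+1}$ transfers to every $D^{i}_{i+1}$ for $i \le t$; however, by framing it in terms of a single geodesic from $x$ to a chosen vertex of $D^{t}_{t+1}$, it reduces to a clean triangle-inequality check and no appeal to the thin-module hypothesis is needed beyond what is already encapsulated in Lemma \ref{Dii zero for all y and i}.
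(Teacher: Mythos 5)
Your proof is correct. The paper itself omits the proof of this proposition (deferring to the analogous \cite[Proposition 7.2]{BF}), so there is no in-text argument to compare against; your construction of $t(y)$ as the largest index with $D^t_{t+1}(x,y)\neq\emptyset$, the appeal to Lemma \ref{Dii zero for all y and i} for the emptiness of the sets $D^i_i(x,z)$, and the geodesic/triangle-inequality argument showing that nonemptiness of $D^t_{t+1}$ propagates down to every $D^i_{i+1}$ with $i\le t$ together give a complete and self-contained proof, including the correct separate treatment of the boundary case $i=0$ where the lemma does not apply.
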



\begin{proposition}\label{t(y)ii}
	With reference to Notation \ref{not1}, assume that $\Gamma$ has, up to isomorphism, exactly one irreducible $T$-module with endpoint $1$, and that this module is thin. Pick $y \in \G(x)$. Let the sets $D^i_j=D^i_j(x,y)$ and let $t(y)$ be as in Proposition \ref{t(y)}. If there exists $j \; (1\leq j \leq d)$ such that $D^j_j$ is nonempty then $D^i_i$ is nonempty for every $t(y)<i\leq j$. 
\end{proposition}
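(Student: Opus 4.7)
The plan is to argue by contradiction, propagating the emptiness of a single $D^{i_0}_{i_0}$ to the emptiness of every $D^i_i$ with $i\ge i_0$ via a short metric induction. Suppose, toward a contradiction, that there exists $i_0$ with $t(y)<i_0<j$ such that $D^{i_0}_{i_0}=\emptyset$ (the case $i_0=j$ would contradict the hypothesis immediately). Since $i_0>t(y)$, Proposition~\ref{t(y)}(ii) already yields $D^{i_0}_{i_0+1}=\emptyset$, and by Definition~\ref{def:D} the level $\G_{i_0}(x)$ is partitioned as $D^{i_0}_{i_0-1}\cup D^{i_0}_{i_0}\cup D^{i_0}_{i_0+1}$. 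It follows that $\G_{i_0}(x)=D^{i_0}_{i_0-1}$; in particular every vertex at distance $i_0$ from $x$ lies at distance exactly $i_0-1$ from $y$.

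The key step is to lift this equality to every higher level. I would prove by induction on $i$ that $\G_i(x)=D^i_{i-1}$ for all $i_0\le i\le d$. The base case $i=i_0$ has just been handled. For the inductive step, assume $\G_i(x)=D^i_{i-1}$ and pick any $w\in \G_{i+1}(x)$. Along any geodesic $x=u_0,u_1,\ldots,u_{i+1}=w$, the penultimate vertex $u_i$ lies in $\G_i(x)=D^i_{i-1}$, so $\partial(y,u_i)=i-1$ and therefore $\partial(y,w)\le \partial(y,u_i)+1=i$. Combined with the triangle inequality $\partial(y,w)\ge |\partial(x,w)-\partial(x,y)|=i$, this forces $\partial(y,w)=i$, i.e., $w\in D^{i+1}_i$. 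Since $w\in \G_{i+1}(x)$ was arbitrary, $\G_{i+1}(x)=D^{i+1}_i$, closing the induction.

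Applying the conclusion at $i=j$ (which lies in $[i_0,d]$ since $i_0<j\le d$) gives $\G_j(x)=D^j_{j-1}$, and hence $D^j_j=\emptyset$, contradicting the hypothesis that $D^j_j\ne\emptyset$. The only delicate point is the base case: one must use Proposition~\ref{t(y)} — and hence, indirectly, the full algebraic hypothesis carried into this section — to rule out $D^{i_0}_{i_0+1}$ and thereby collapse $\G_{i_0}(x)$ entirely into $D^{i_0}_{i_0-1}$. Once that single collapse is in place, the geodesic argument in the inductive step is purely metric and propagates the conclusion all the way up to level $j$ without any further appeal to Theorem~\ref{thm:main}.
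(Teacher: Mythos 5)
Your argument is correct. Note that the paper itself omits the proof of this proposition, deferring to the analogous result \cite[Proposition 7.3]{BF}, so there is no in-text proof to compare against; judged on its own, your proof is complete and self-contained. The one nontrivial input is exactly where you flag it: Proposition \ref{t(y)}(ii) kills $D^{i_0}_{i_0+1}$ for $i_0>t(y)$, so the assumed emptiness of $D^{i_0}_{i_0}$ collapses $\G_{i_0}(x)$ onto $D^{i_0}_{i_0-1}$, and from there the upward induction (penultimate vertex of a geodesic plus the triangle inequality $\partial(y,w)\ge \partial(x,w)-\partial(x,y)$) is purely metric and forces $\G_i(x)=D^i_{i-1}$, hence $D^i_i=\emptyset$, for all $i\ge i_0$ --- contradicting $D^j_j\ne\emptyset$. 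This is arguably cleaner than a direct argument (e.g.\ tracing a $yz$-geodesic for $z\in D^j_j$, which only produces nonempty sets $D^k_k$ from some switching index $k^*$ down to $j$ and does not by itself pin $k^*$ down to $t(y)+1$), since the contrapositive formulation propagates emptiness upward with no further appeal to the algebraic hypothesis or to walk counts.
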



Propositions \ref{t(y)} and \ref{t(y)ii} help us to understand the combinatorial structure of graphs which have, up to isomorphism, exactly one irreducible $T$-module with endpoint $1$, which is thin.  

We now consider the possible intersection diagrams of $\G$ with respect to the edge  $\{x,y\}$, for every $y\in \G(x)$. Let $d$ denote the eccentricity of $x$. Then, for every $y\in \G(x)$, we observe $\epsilon(y) \in \{d-1,d,d+1\}$. Fix now $y\in \G(x)$ arbitrarily. We have two cases.

\bigskip  
With reference to Proposition \ref{t(y)}, it is easy to see the following $(i)$-$(ii)$ are equivalent: 
\begin{enumerate}[label={(\textit{\roman*)}}]
	\item
	The integer $t:=t(y)$ is independent of the choice of $y\in \G(x)$. 
	\item For each $i \; (1 \leq i \leq  d)$, if for some  $y\in \G(x)$ the set $ D^i_{i+1}(x,y)\neq \emptyset$ then for every $y\in \G(x)$ the set $D^i_{i+1}(x,y)\neq \emptyset$ . 
\end{enumerate}

At this point, the next question naturally arises. 

\begin{question}
	With reference to Notation \ref{not1} and Proposition \ref{t(y)}, assume that $\Gamma$ has, up to isomorphism, exactly one irreducible $T$-module with endpoint $1$, and that this module is thin. Does the integer $t:=t(y)$ depend on the choice of $y\in \G(x)$?
\end{question}

The following results partially answer the above question. However, a proof of the general case seems to need a nontrivial approach.  

\begin{proposition}\label{t(y)iii}
	With reference to Notation \ref{not1}, assume that $\Gamma$ has, up to isomorphism, exactly one irreducible $T$-module with endpoint $1$, and that this module is thin. For $y \in \G(x)$, let $t(y)$ be as in Proposition \ref{t(y)}. If for some  $z \in \G(x)$ the set $D^1_1(x,z)$ is nonempty then the integer $t:=t(y)$ does not depend on the choice of $y\in \G(x)$. 
\end{proposition}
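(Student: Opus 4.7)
My plan is to reduce the claim directly to the contrapositive of Lemma \ref{Dii zero for all y and i}. The hypothesis gives a vertex $z \in \G(x)$ with $D^1_1(x,z)$ nonempty, which is exactly the sort of obstruction that Lemma \ref{Dii zero for all y and i} forbids whenever some $D^i_{i+1}(x,y)$ is nonempty for $i\ge 1$. So I expect the integer $t(y)$ to collapse to $0$ for every neighbour $y$ of $x$.

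More concretely, I would first spell out the contrapositive of Lemma \ref{Dii zero for all y and i}: if there exist $j$ with $1\le j\le i$ and $z\in\G(x)$ such that $D^j_j(x,z)\neq\emptyset$, then $D^i_{i+1}(x,y)=\emptyset$ for every $y\in\G(x)$. Taking $j=1$ with the vertex $z$ provided by the hypothesis, and applying this to an arbitrary $i\ge 1$, I obtain that $D^i_{i+1}(x,y)=\emptyset$ for every $y\in\G(x)$ and every $i\ge 1$.

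Next I would invoke Proposition \ref{t(y)}: by part (i) of that proposition, $D^i_{i+1}(x,y)$ must be nonempty for $0\le i\le t(y)$. Since we have just shown that $D^1_2(x,y)=\emptyset$ for every $y$, this forces $t(y)<1$. Combined with the obvious fact $D^0_1(x,y)=\{x\}\neq\emptyset$, we conclude $t(y)=0$ for every $y\in\G(x)$, which proves that $t(y)$ is independent of $y$.

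There is no real obstacle here; the argument is essentially a one-line application of the contrapositive of Lemma \ref{Dii zero for all y and i}. The only care required is in the rewriting of that contrapositive, because the statement of Lemma \ref{Dii zero for all y and i} quantifies over two different roles of the variable $y$ (the one in the hypothesis and the universally quantified one in the conclusion), so I would make the universal quantifier explicit to avoid ambiguity before reading off the contrapositive.
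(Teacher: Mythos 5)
Your argument is correct and is essentially the paper's own proof: both apply (the contrapositive of) Lemma \ref{Dii zero for all y and i} with the nonempty set $D^1_1(x,z)$ to conclude that $D^1_2(x,y)=\emptyset$ for every $y\in\G(x)$, and hence $t(y)=0$ for all $y$. Your extra care in making the quantifiers of the lemma explicit is harmless but not needed beyond the case $i=1$.
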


\begin{proof}
	Suppose for some  $z \in \G(x)$ the set $D^1_1(x,z)$ is nonempty. Then, by Lemma \ref{Dii zero for all y and i}, the set $D^1_2(x,y)$ is empty for every $y\in \G(x)$. This shows that  $t(y)=0$ for every $y\in \G(x)$. The result follows.
\end{proof}

\begin{proposition}\label{t(y)bis}
	With reference to Notation \ref{not1}, assume that $\Gamma$ has, up to isomorphism, exactly one irreducible $T$-module with endpoint $1$, and that this module is thin. For $y \in \G(x)$, let $t(y)$ be as in Proposition \ref{t(y)}. If for every  $y \in \G(x)$ there exists an integer $i \; (1 \leq i \leq d)$ such that the set $D^i_i(x,y)$ is nonempty then the integer $t:=t(y)$ does not depend on the choice of $y\in \G(x)$. 
\end{proposition}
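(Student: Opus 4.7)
The plan is to argue by contradiction. Suppose that $t(y)$ is not constant on $\Gamma(x)$; then we may pick $y_1,y_2\in\Gamma(x)$ with, without loss of generality, $t(y_1)<t(y_2)$, so that $t(y_1)+1\le t(y_2)\le d$. The key observation that drives the whole argument is the uniformity in the neighbour variable that is already baked into Proposition \ref{t(y)}(i): the set $D^i_i(x,z)$ is empty for \emph{every} $z\in\Gamma(x)$ whenever $0\le i\le t(y_2)$, and not only for $z=y_2$.

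First I would apply this uniformity with $i=t(y_1)+1$ and $z=y_1$ to obtain
$$
D^{t(y_1)+1}_{t(y_1)+1}(x,y_1)=\emptyset. \qquad (\ast)
$$
Next I would use the hypothesis of the proposition for the neighbour $y_1$: there exists $j$ with $1\le j\le d$ such that $D^j_j(x,y_1)\ne\emptyset$. Since Proposition \ref{t(y)}(i), now applied to $y_1$ itself, forces $D^i_i(x,y_1)=\emptyset$ for every $i\le t(y_1)$, such a $j$ must satisfy $j>t(y_1)$, and in particular $j\ge t(y_1)+1$.

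The final step is to invoke Proposition \ref{t(y)ii} with $y=y_1$ and this integer $j$, which yields $D^i_i(x,y_1)\ne\emptyset$ for every $t(y_1)<i\le j$. Specialising to $i=t(y_1)+1$ gives $D^{t(y_1)+1}_{t(y_1)+1}(x,y_1)\ne\emptyset$, in direct contradiction with $(\ast)$. Hence the assumption $t(y_1)<t(y_2)$ was untenable and $t(y)$ is independent of the choice of $y\in\Gamma(x)$.

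I do not foresee a substantial obstacle here. The entire argument rests on recognising that the "emptiness of $D^i_i(x,z)$" clause in Proposition \ref{t(y)}(i) ranges over all neighbours $z$ of $x$, so a large value of $t$ at one neighbour propagates emptiness back to every other neighbour; after that, Propositions \ref{t(y)} and \ref{t(y)ii} interlock cleanly. The only minor technical point to verify is $t(y_1)+1\le d$, which is automatic from $t(y_1)<t(y_2)\le d$.
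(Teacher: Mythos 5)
Your proof is correct and rests on the same key ingredients as the paper's: the cross-neighbour emptiness clause of Proposition \ref{t(y)}(i) (which encodes Lemma \ref{Dii zero for all y and i}) together with Proposition \ref{t(y)ii} applied to the index $j$ supplied by the hypothesis. The paper packages this as an extremal argument around a neighbour $w$ minimizing $t$ (showing $t(w)=k-1$ and then $t(y)\le t(w)$ for all $y$), whereas you run a direct contradiction between two neighbours with distinct values of $t$; the arguments are essentially the same.
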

%

\begin{proof}
	Pick $w\in \G(x)$ such that $t(w)=\min\{t(y) \mid y \in \G(x) \}$. Then,  by the choice of $w\in  \G(x)$, we have that $t(w) \leq t(y)$  for all $y\in \G(x)$. Let $k$ be the least integer such that $D^k_k(x,w)\neq \emptyset$. We assert that $t(w)=k-1$. To prove our claim, we first observe that, by Lemma \ref{Dii zero for all y and i}, we have $D^{k}_{k+1}(x,w)=\emptyset$. This shows that $t(w)\leq k-1$. Suppose now that $t(w)<k-1$. Then, $t(w)+1<k$ and, by the choice of $k$, $D^{t(w)+1}_{t(w)+1}(x,w)=\emptyset$, contradicting Proposition \ref{t(y)ii}. Therefore, we have that $t(w)=k-1$. Moreover, by Lemma \ref{Dii zero for all y and i}, the set $D^{k}_{k+1}(x,y)=\emptyset$ for all $y\in \G(x)$. This yields that $t(y)\leq t(w)$ for all $y\in \G(x)$. Consequently,  $t(y)=t(w)$  for all $y\in \G(x)$. The result follows.
\end{proof}

\begin{proposition}\label{t(y)bis2}
	With reference to Notation \ref{not1}, assume that $\Gamma$ has, up to isomorphism, exactly one irreducible $T$-module with endpoint $1$, and that this module is thin. For $y \in \G(x)$, let $t(y)$ be as in Proposition \ref{t(y)}. If $\Gamma$ is a tree then the integer $t:=t(y)$ does not depend on the choice of $y\in \G(x)$. 
\end{proposition}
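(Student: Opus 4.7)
The plan is to deduce the statement from Lemma~\ref{5nonzero1}, which is already available under Notation~\ref{not1} (only the thin trivial module is required). First I would record the basic tree fact: if $\G$ is a tree, then for every edge $\{x,y\}$, removing this edge disconnects $\G$ into two components; let $B_y$ denote the component containing $y$. For any $z\in X$, either $z\in B_y$, in which case the unique shortest $xz$-path passes through $y$ and $\partial(x,z)=\partial(y,z)+1$, or $z\notin B_y$, in which case the shortest $yz$-path passes through $x$ and $\partial(y,z)=\partial(x,z)+1$. Consequently $D^i_i(x,y)=\emptyset$ for every $i\ge 1$, and
$$D^i_{i-1}(x,y)=B_y\cap \G_i(x),$$
the set of vertices of the ``subtree'' $B_y$ at distance exactly $i$ from $x$.

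Next, I would apply Lemma~\ref{5nonzero1}: it yields $D^i_{i-1}(x,y)\ne \emptyset$ for every $y\in \G(x)$ and every $i$ with $1\le i\le d$. Combined with the formula above, every $B_y$ contains a vertex at distance $d$ from $x$; writing $e_y:=\max\{\partial(x,z):z\in B_y\}$, this shows $e_y=d$ for every $y\in \G(x)$.

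Finally, fix $y\in \G(x)$ and pick any integer $i$ with $1\le i\le d$. Using $|\G(x)|\ge 2$ choose $y'\in \G(x)\setminus\{y\}$, and using $e_{y'}=d\ge i$ choose $z\in B_{y'}\cap \G_i(x)$. Since $z\notin B_y$, the dichotomy above gives $\partial(y,z)=\partial(x,z)+1=i+1$, so $z\in D^i_{i+1}(x,y)$. Hence $D^i_{i+1}(x,y)\ne \emptyset$ for every $1\le i\le d$, and by Proposition~\ref{t(y)} this forces $t(y)=d$ for every $y\in \G(x)$, independent of the choice of $y$. The only nontrivial moment is recognising that the tree structure reduces the problem to Lemma~\ref{5nonzero1}; once this is seen, everything else is direct bookkeeping with the two disjoint ``sides'' of the edge $\{x,y\}$.
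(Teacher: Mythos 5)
Your proof is correct, but it takes a genuinely different route from the paper's. The paper argues by contradiction: assuming $D^k_{k+1}(x,y)=\emptyset$ for a least $k$, it uses the absence of cycles to produce two vertices in $\G_{k-1}(x)$ with different local intersection numbers $b_{k-1}$ (one in $D^{k-1}_{k}(x,y)$ with $b_{k-1}=0$, one in $D^{k-1}_{k-2}(x,y)$ with $b_{k-1}>0$, the latter via Lemma \ref{5nonzero1}), concluding that $\G$ is not distance-regular around $x$ and then invoking \cite[Corollary 12]{FM3} to contradict thinness of the trivial module. You instead argue directly: the bridge decomposition of the tree at the edge $\{x,y\}$ identifies $D^i_{i-1}(x,y)$ with $B_y\cap\G_i(x)$, so Lemma \ref{5nonzero1} says every branch $B_{y'}$ meets every sphere $\G_i(x)$ for $1\le i\le d$; taking $y'\ne y$ (possible since $|\G(x)|\ge 2$) and $z\in B_{y'}\cap\G_i(x)$ then exhibits an explicit element of $D^i_{i+1}(x,y)$, forcing $t(y)=d$ for all $y$. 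Both proofs rest on Lemma \ref{5nonzero1} and reach the same conclusion $t(y)=d$, but yours is constructive, self-contained, and avoids the external appeal to \cite[Corollary 12]{FM3}, while the paper's version fits the contradiction-via-local-regularity template used elsewhere in Section \ref{sec:comments}. The only places where you are slightly informal --- that $B_{y'}\cap\G_i(x)\ne\emptyset$ follows from $e_{y'}=d$, and that distinct branches $B_y$, $B_{y'}$ are disjoint --- are immediate from the tree structure (indeed $B_{y'}\cap\G_i(x)=D^i_{i-1}(x,y')$ is nonempty directly by Lemma \ref{5nonzero1}), so there is no gap.
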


\begin{proof}
	Pick $y\in \G(x)$. Suppose there exists an integer $i \; (1\leq i \leq d)$ such that the set $D^i_{i+1}(x,y)$ is empty. Let $k$ be the least integer such that $D^k_{k+1}(x,y)$ is empty. Since $\G$ is bipartite and $x$ has valency at least $2$, we observe  $D^1_{2}(x,y)$ is not empty. This implies that $k\geq2$. By the choice of $k$, we have that the set $D^{k-1}_{k}(x,y)$ is nonempty. Then, since $\G$ has no cycles, for a vertex $z\in D^{k-1}_{k}(x,y)$ we have $b_{k-1}(x,z)=0$. By Lemma \ref{5nonzero1}, the set $D^{j}_{j-1}(x,y)$ is nonempty for every $j \; (1\leq j \leq d)$ and so, for $w \in D^{k-1}_{k-2}(x,y)$, the scalar $b_{k-1}(x,w)>0$. This shows that $\G$ is not distance-regular around $x$. Therefore, by \cite[ Corollary 12]{FM3} the trivial module $T\widehat{x}$ is not thin, a contradiction. Hence, for every integer $i \; (1\leq i \leq d)$ the set $D^i_{i+1}(x,y)$ is not empty. This yields $t(y)=d$. The result follows.    
\end{proof}



\section{Examples}
\label{sec:9}

In this section we present some examples of graphs for which the equivalent conditions of Theorem \ref{thm:main} hold for a certain vertex $x$. Several examples of such graphs where $x$ is distance-regularized, are presented in \cite{BF, FM2}. We therefore turn our attention to the case when $x$ is not necessarily distance-regularized. Recall that we are still refering to Definition \ref{def:shape} and Notation \ref{not1} throughout this section.

\medskip 


\begin{example}\label{graph}
	{ Let $\Gamma$ be the connected graph with vertex set $X=\left\lbrace 1,2,3,4,5,6\right\rbrace $ and edge set $\mathcal{R}=\left\lbrace \left\lbrace 1, 2\right\rbrace, \left\lbrace 1, 3\right\rbrace,  \left\lbrace 2, 3\right\rbrace, \left\lbrace 2, 4\right\rbrace, \left\lbrace 2, 5\right\rbrace, \left\lbrace 3, 5\right\rbrace, \left\lbrace 3, 6\right\rbrace \right\rbrace $. See also Figure \ref{001} and observe that $\G$ is not bipartite. Fix vertex $1 \in X$ and note that $\epsilon(1)=2$. Notice that $\G$ is not distance-regular around $1$. Consider the Terwilliger algebra of $\G$ with respect to vertex $1$. It is now easy to verify that for every integer $i$ $\; (0\leq i \leq 2)$ there exist scalars  $\alpha_i, \beta_i$, such that for every $y \in \G_i(x)$ the following hold:
		\begin{equation}
			\label{eqmain}
			r^{i+1}\ell(y) = \alpha_i \; r^{i}(y), \qquad
			r^{i}f(y)= \beta_i \; r^{i}(y), \nonumber
		\end{equation}
		with the values of $\alpha_i, \beta_i \; (0\leq i \leq 2)$ as presented in Table \ref{table:yyz2}.
		\begin{table}[h!]
			\begin{center}
				\begin{tabular}{|c|c|c|c|c|c|c|c|c|c|}
					\hline
					$i$ &0 & 1 & 2  \\
					\hline 
					\hline
					$\alpha_i$ & 2 & 3 & 0  \\
					\hline
					$\beta_i$ & 0 & 1 & 0   \\
					\hline
				\end{tabular}
				\caption{Values of scalars $\alpha_i$ and  $\beta_i$, $(0 \leq i \leq 2)$.}
				\label{table:yyz2}
			\end{center}
		\end{table}

		Therefore, by  \cite[ Theorem 6]{FM3} the trivial $T$-module is thin. 	Moreover, properties ${ (a), (b)}$ described in part ${(ii)}$ of Theorem \ref{thm:main} are satisfied with the values of $\kappa_i, \mu_i, \theta_i, \rho_i \; (1 \le i \le 2)$ as presented in Table \ref{table:yyz3}.
		Consequently, by Theorem \ref{thm:main}, it holds that $\G$ has, up to isomorphism, a unique irreducible $T$-module with endpoint $1$, and this module is thin. Moreover, since $\dim(\Es_1V)=|\Gamma(x)|=2$, it is easy to see that there is actually only one irreducible $T$-module with endpoint $1$. This $T$-module has dimension $s=2$ and is spanned by $w=\widehat{3}-\widehat{2}$ and $Rw=\widehat{6}-\widehat{4}$.  Note also that the partitions given by the intersection diagrams of $\G$ with respect to the edges $\{1,2\}$ and $\{1,3\}$ are not equitable.}
\end{example}

\begin{figure*}[ht!]
	\begin{tabularx}{\linewidth}[t]{*{2}X}
		\begin{tabular}[c]{p{\linewidth}}
			\centering
			\begin{tabular}{|c|c|c|c|c|c|c|c|c|c|}
				\hline
				$i$ & 1 & 2   \\
				\hline 
				\hline
				$\kappa_i$ & 1 & 0   \\
				\hline
				$\mu_i$ & 1 & 0   \\
				\hline
				\hline
				$\theta_i$ & -1 & 0   \\
				\hline
				$\rho_i$   & 1 & 0   \\
				\hline
			\end{tabular}
		\end{tabular} &
		\centering
		\begin{tabular}[c]{c}
			\begin{tikzpicture}[line cap=round,line join=round,>=triangle 45,x=0.8cm,y=0.8cm]
				\draw [line width=1.5pt,color=qqqqtt] (-5,3)-- (-3,2);
				\draw [line width=1.5pt,color=qqqqtt] (-3,2)-- (-1,1);
				\draw [line width=1.5pt,color=qqqqtt] (-3,2)-- (-1,3);
				\draw [line width=1.5pt,color=qqqqtt] (-5,3)-- (-3,4);
				\draw [line width=1.5pt,color=qqqqtt] (-3,4)-- (-1,5);
				\draw [line width=1.5pt,color=qqqqtt] (-3,4)-- (-1,3);
				\draw [line width=1.5pt] (-3,2)-- (-3,4);
				\begin{scriptsize}
					\draw [fill=qqqqtt] (-1,5) circle (3.5pt);
					\draw[color=qqqqtt] (-0.84,5.43) node {$4$};
					\draw [fill=qqqqtt] (-1,3) circle (3.5pt);
					\draw[color=qqqqtt] (-0.84,3.43) node {$5$};
					\draw [fill=qqqqtt] (-1,1) circle (3.5pt);
					\draw[color=qqqqtt] (-0.84,1.43) node {$6$};
					\draw [fill=qqqqtt] (-5,3) circle (3.5pt);
					\draw[color=qqqqtt] (-4.84,3.43) node {$1$};
					\draw [fill=qqqqtt] (-3,4) circle (3.5pt);
					\draw[color=qqqqtt] (-2.84,4.43) node {$2$};
					\draw [fill=qqqqtt] (-3,2) circle (3.5pt);
					\draw[color=qqqqtt] (-2.84,2.43) node {$3$};
				\end{scriptsize}
			\end{tikzpicture}
		\end{tabular} \tabularnewline
		
		\captionof{table}{\label{table:yyz3} Values of scalars $\kappa_i$, $\mu_i$,  $\theta_i$ and  $\rho_i$, $(1 \leq i \leq 2)$.} &
		\captionof{figure}{\label{001}	Graph $\G$ from Example \ref{graph}.} \tabularnewline
	\end{tabularx}
	
\end{figure*}

We next give another example of a non-bipartite graph where the equivalent conditions of Theorem \ref{thm:main} hold for a non-distance-regularized vertex $x$. 

\subsection{A construction}

Our next goal is to focus on the construction of infinitely many new graphs, that satisfy the equivalent conditions of Theorem \ref{thm:main} for a certain vertex. To do this, we will need the folowing notation. 

\begin{notation}\label{notc}
	Let $\G$ and $\Sigma$ denote finite, simple graphs with vertex set $X$ and $Y$, respectively. Assume that $\G$ is a connected graph which is pseudo-distance-regular around a vertex $x\in X$. Assume also that $\Sigma$ is regular with order at least $2$. Consider the Cartesian product $\G \square \Sigma$. Namely, the graph with vertex set $X\times Y$ where two vertices $(x,y)$ and $(x',y')$ are adjacent if and only if $x=x'$ and $y$ is adjacent to $y'$, or $y=y'$ and $x$ is adjacent to $x'$. Let $H=H(\Gamma, \Sigma)$ denote the graph obtained by adding a new vertex $w$ to the graph $\G \square \Sigma$, and connecting this new vertex $w$ with all vertices $(x,y)$, where $y$ is an arbitrary vertex of $\Sigma$; see for example Figures \ref{stable0002} and \ref{complete0002}.
\end{notation}

With reference to Notation \ref{notc}, we observe that for an arbitrary vertex $(x',y')$ of $H$ different from $w$, the distance between $w$ and $(x',y')$ satisfies $\partial_H(w,(x',y')) = \partial_\G(x,x')+1.$ It thus follows that $d_H = d +1$, where $d_H$ is the eccentricity of $w$ in $H$ and $d$ is  the eccentricity of $x$ in $\G$. Moreover,  for $1 \le i \le d_H$ we have 
$$
H_i(w) = \G_{i-1}(x) \times Y = \{(u,y) \mid u \in \G_{i-1}(x), y \in Y\}.
$$
In addition, it is easy to see that $H$ is distance-regular around $w$ if and only if $\Gamma$ is distance-regular around $x$. 

\bigskip 

We are now ready to give some  constructions of infinitely many graphs, that satisfy the equivalent conditions of Theorem \ref{thm:main} for a certain vertex.

\begin{proposition}\label{propthin}
	With reference to Notation \ref{notc}, pick vertex $w$ in $H$ and consider the Terwilliger algebra $T=T(w)$. Then, the trivial $T$-module is thin. 
\end{proposition}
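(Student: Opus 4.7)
The plan is to verify, for $H$ with base vertex $w$, the combinatorial criterion of \cite[Theorem 6]{FM3}, which asserts that $T\widehat{w}$ is thin if and only if for each $i$ with $0\le i\le d_H$ there exist scalars $\alpha_i^H,\beta_i^H$ such that $r^{i+1}\ell(v)=\alpha_i^H\,r^i(v)$ and $r^i f(v)=\beta_i^H\,r^i(v)$ for every $v\in H_i(w)$.

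First I would analyse walks starting at $w$. The only neighbours of $w$ in $H$ are the vertices $(x,y')$, $y'\in Y$, so any walk from $w$ begins by selecting some $y_1\in Y$. Next, observe that an edge in the Cartesian product $\G\,\square\,\Sigma$ fixes one coordinate and moves along an edge in the other; moving along a $\Sigma$-edge preserves $\partial_\G(x,\cdot)$, while moving along a $\G$-edge changes it by exactly $\pm 1$. Consequently, along any walk of shape $r^i$ from $w$, the first step selects $y_1$ and every subsequent step must advance along a $\G$-edge, so the $Y$-coordinate is frozen at $y_1$ throughout. It follows that for $(u,y)\in H_i(w)=\G_{i-1}(x)\times Y$ with $i\ge 1$ we have $r^i_H((u,y))=r^{i-1}_\G(u)$, with $y_1=y$ forced.

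Applying the same reasoning, the terminal $\ell$-step of any shape-$r^{i+1}\ell$ walk strictly decreases $\G$-distance, so it must be a $\G$-edge leaving $y$ unchanged; this yields $r^{i+1}\ell_H((u,y))=r^i\ell_\G(u)$. For shape $r^i f$, however, the terminal $f$-step preserves the $H$-distance to $w$, so it may be a $\Sigma$-edge (fixing $u$ and requiring a $\Sigma$-neighbour of $y$, contributing $k\cdot r^{i-1}_\G(u)$, where $k$ is the common degree of $\Sigma$) or a $\G$-edge within $\G_{i-1}(x)$ (fixing $y$, contributing $r^{i-1}f_\G(u)$). Adding these gives $r^i f_H((u,y))=k\,r^{i-1}_\G(u)+r^{i-1}f_\G(u)$.

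Now I would invoke the hypothesis that $\G$ is pseudo-distance-regular around $x$: by \cite[Theorem 6]{FM3} applied to $\G$, there exist scalars $\alpha_j^\G,\beta_j^\G$ with $r^{j+1}\ell_\G(u)=\alpha_j^\G\,r^j_\G(u)$ and $r^j f_\G(u)=\beta_j^\G\,r^j_\G(u)$ for every $u\in\G_j(x)$. Substituting into the formulas above produces the required identities on $H$ with $\alpha_i^H=\alpha_{i-1}^\G$ and $\beta_i^H=k+\beta_{i-1}^\G$ for $1\le i\le d_H$, each independent of the chosen $(u,y)\in H_i(w)$. The base case $i=0$ is direct: $r^1\ell_H(w)=|Y|$ counts the closed walks $w\to(x,y')\to w$, and $r^0 f_H(w)=0$, so $\alpha_0^H=|Y|$ and $\beta_0^H=0$. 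This verifies the criterion, whence $T\widehat{w}$ is thin. The only step requiring genuine care is the freezing of the $Y$-coordinate along $r$-shape walks, which reduces to a short case analysis of the two possible adjacency types in $\G\,\square\,\Sigma$ relative to $\partial_\G(x,\cdot)$; everything else is bookkeeping.
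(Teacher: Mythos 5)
Your proof is correct, but it takes a different (and more informative) route than the paper: the paper's entire proof of Proposition \ref{propthin} is the single line ``Immediate from \cite[Section 6.5]{FM3}'', i.e.\ a citation, whereas you actually verify the thin-trivial-module criterion of \cite[Theorem 6]{FM3} for $w$ from scratch by expressing the walk counts $r^i_H$, $r^{i+1}\ell_H$ and $r^i f_H$ in $H$ in terms of $r^{i-1}_\G$, $r^i\ell_\G$ and $r^{i-1}f_\G$ in $\G$ and then feeding in the pseudo-distance-regularity of $x$. Your reductions $r^i_H((u,y))=r^{i-1}_\G(u)$, $r^{i+1}\ell_H((u,y))=r^i\ell_\G(u)$ and $r^if_H((u,y))=k\,r^{i-1}_\G(u)+r^{i-1}f_\G(u)$ all check out, and they are in the same spirit as the computations the paper itself carries out later in Propositions \ref{stable} and \ref{complete} (equations \eqref{s1}--\eqref{s4} and \eqref{xeqrfl1}--\eqref{xs4}), so your argument has the added benefit of making the deferred step self-contained and of exhibiting the explicit parameters $\alpha_i^H=\alpha_{i-1}^\G$, $\beta_i^H=k+\beta_{i-1}^\G$. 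One caveat: your intermediate assertion that moving along a $\G$-edge changes $\partial_\G(x,\cdot)$ ``by exactly $\pm1$'' is false in general, since $\G$ may have flat edges with respect to $x$ (e.g.\ the edge $\{2,3\}$ in Example \ref{graph}); fortunately this claim is not load-bearing --- for $r$- and $\ell$-steps you only need that $\Sigma$-edges cannot change the first coordinate, and in the $f$-step analysis you correctly reinstate the flat $\G$-edges inside $\G_{i-1}(x)$ --- but the sentence should be corrected to say that a $\G$-edge changes $\partial_\G(x,\cdot)$ by $\pm1$ or $0$.
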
 

\begin{proof}
		Immediate from \cite[Section 6.5]{FM3}.
\end{proof}

With reference to  Notation \ref{notc}, in what follows, we use subscripts to distinguish the number of walks of a particular shape in $H$ and in $\G$. For example, for $x' \in \G_i(x)$, we denote the number of  walks from $x$ to $x'$ of the shape $r^{i+1} \ell$ with respect to $x$ by $r^{i+1} \ell_\G(x')$. For $(x',y') \in H_i(w)$, we denote the number of walks from $w$ to $(x',y')$ of the shape $r^{i+1} \ell$ with respect to $w$ by $r^{i+1} \ell_H((x',y'))$. We next study the instances when $\Sigma$ is either an empty or a complete graph.

\begin{proposition}\label{stable}
	With reference to Notation \ref{notc}, pick vertex $w$ in $H$ and consider the Terwilliger algebra $T=T(w)$. If $\Sigma$ is isomorphic to the empty graph $S_n \; (n\geq 2)$ then graph $H$ has, up to isomorphism, exactly one irreducible $T$-module with endpoint $1$, which is thin.
\end{proposition}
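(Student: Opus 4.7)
The plan is to apply Theorem~\ref{thm:main}. By Proposition~\ref{propthin} the trivial $T(w)$-module of $H$ is thin, so the standing hypothesis of Theorem~\ref{thm:main} is in force and it suffices to verify the combinatorial conditions (a), (b) in part (ii) of that theorem.

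Since $\Sigma=S_n$ has no edges, $\G\,\square\, S_n$ is the disjoint union of $n$ copies of $\G$ indexed by $y_1,\dots,y_n$, and $H$ is obtained by attaching the apex $w$ to the image of $x$ in each copy. A direct computation gives $\partial_H(w,(x',y_k))=\partial_\G(x,x')+1$; fixing a neighbour $\tilde y=(x,y_j)\in H(w)$, distinct copies communicate only through $w$, so $\partial_H(\tilde y,(x',y_k))$ equals $\partial_\G(x,x')$ if $k=j$ and $\partial_\G(x,x')+2$ otherwise. In particular, for $1\le i\le d_H$,
$$D^i_{i-1}(w,\tilde y)=\{(x',y_j):x'\in\G_{i-1}(x)\},\quad D^i_{i+1}(w,\tilde y)=\{(x',y_k):k\ne j,\ x'\in\G_{i-1}(x)\},\quad D^i_i(w,\tilde y)=\emptyset.$$

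The walk numbers of Definition~\ref{def:shape} then translate from $H$ to $\G$ as follows. Since every edge of $H\setminus\{w\}$ is internal to a single copy of $\G$, and within that copy a step is of type $r$, $f$, or $\ell$ with respect to $w$ precisely when the corresponding step in $\G$ is of the same type with respect to $x$, one checks that
$$r^{i-1}_H(\tilde y,z)=r^{i-1}_\G(x,x'),\qquad r^i\ell_H(\tilde y,z)=r^i\ell_\G(x,x'),\qquad r^{i-1}f_H(\tilde y,z)=r^{i-1}f_\G(x,x')$$
whenever $z=(x',y_j)$, and all three counts vanish if $z$ lies in another copy. For shape $\ell r^i$ the first step from $\tilde y$ must descend to $w$, after which the remaining $r^i$-walk is forced to enter exactly the copy containing $z$; hence $\ell r^i_H(\tilde y,z)=r^{i-1}_\G(x,x')$ for every $z\in H_i(w)$, independently of which copy $z$ lies in.

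I would then verify Theorem~\ref{thm:main}(ii) by taking $\mu_i=\rho_i=0$ and $\kappa_i=\alpha_{i-1}$, $\theta_i=\beta_{i-1}$, where $\alpha_{i-1},\beta_{i-1}$ are the scalars supplied by pseudo-distance-regularity of $\G$ around $x$ via \cite[Theorem~6]{FM3}, namely $r^i\ell_\G(x')=\alpha_{i-1}\,r^{i-1}_\G(x')$ and $r^{i-1}f_\G(x')=\beta_{i-1}\,r^{i-1}_\G(x')$ for every $x'\in\G_{i-1}(x)$. For $z\in D^i_{i+1}(w,\tilde y)$ both identities of (a) reduce to $0=0$ by the vanishing observed above, while for $z\in D^i_{i-1}(w,\tilde y)$ the identities in (b) become exactly the two displayed pseudo-distance-regularity equations. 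Since $n\ge 2$ and $\G_{i-1}(x)\ne\emptyset$ for $1\le i\le d_H$, the set $D^i_{i+1}(w,\tilde y)$ is always nonempty, so the choice $\rho_i=0$ automatically satisfies the last clause of Theorem~\ref{thm:main}(ii). The only real work is the walk-shape translation in the third paragraph; once that bookkeeping is in place, the combinatorial content of (ii) collapses onto the pseudo-distance-regularity of $\G$.
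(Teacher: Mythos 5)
Your proof is correct and follows essentially the same route as the paper: verify the combinatorial conditions of Theorem \ref{thm:main}(ii) by computing the distance partition of $H$ with respect to the edge $\{w,(x,y)\}$, translating the walk counts $r^{i-1}$, $r^i\ell$, $r^{i-1}f$, $\ell r^i$ back to $\G$, and invoking the pseudo-distance-regularity relations from \cite[Theorem 6]{FM3} to take $\kappa_i=\alpha_{i-1}$, $\theta_i=\beta_{i-1}$, $\mu_i=\rho_i=0$. The only (harmless) divergence is that you deduce $D^i_i=\emptyset$ directly from the distance formula in the disjoint-union structure, whereas the paper routes this through Lemma \ref{Dii zero for all y and i}; your direct derivation is if anything cleaner.
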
 

\begin{proof}
	By Proposition \ref{propthin}, we first observe the trivial module is thin. We will next show that $H$ satisfies the combinatorial conditions of Theorem \ref{thm:main}.   Suppose that $\Sigma$ is isomorphic to the empty graph $S_n \; (n\geq 2)$. Pick $(x,y)\in H(w)$ and consider the sets $D^i_j=D^i_j(w, (x,y))$. Since the eccentricity of $x$ equals $d$ it is easy to see the sets $D^{j}_{j+1} \; (0\leq j\leq d_H)$ and $D_{j-1}^j \; (1\leq j\leq d_H)$ are all nonempty for all $(x,y)\in H(w)$.    Consequently, by Lemma \ref{Dii zero for all y and i}, the set $D^j_j$ is empty for every $j \; (1\leq j\leq d_H)$ and for all $(x,y)\in H(w)$. In addition, we also notice
	\begin{eqnarray}
		D^j_{j+1}(w, (x,y)) &=& \G_{j-1}(x) \times (Y\setminus\{y\}) \nonumber \\ &=& \{(u,y') \mid u \in \G_{j-1}(x), y' \in Y, y'\neq y\}, \nonumber \\ \nonumber \\
		D^j_{j-1}(w, (x,y)) &=& \G_{j-1}(x) \times \{y\} \nonumber \\ &=& \{(u,y) \mid u \in \G_{j-1}(x)\}. \nonumber 
	\end{eqnarray}
	
	\noindent Pick $(x',y') \in H_i(w)$ for $1\leq i\leq d_H$. We observe that 
	\begin{eqnarray}\label{s1}
		\ell r^i_H\left( (x,y), (x',y')\right)= r^{i-1}_{\Gamma}(x') 
	\end{eqnarray}
	which is a positive integer since $\partial_{\Gamma}(x,x')=i-1$ implies $r^{i-1}_{\Gamma}(x')>0$. Moreover, for  $(x',y') \in D^i_{i+1} \; (1\leq i\leq d_H)$ we have 
	\begin{eqnarray}\label{s2}
		r^i\ell_H\left( (x,y), (x',y')\right)=r^{i-1}f_H\left( (x,y), (x',y')\right)=0. 
	\end{eqnarray}
	Similarly, for  $(x',y') \in D^i_{i-1} \; (1\leq i\leq d_H)$ we have 
	\begin{eqnarray}
		r^i\ell_H\left( (x,y), (x',y')\right)&=&r^{i}\ell_{\Gamma}(x'),  \label{eqrfl1}\\
		r^{i-1}f_H\left( (x,y), (x',y')\right)&=&r^{i-1}f_{\Gamma}(x'), \label{eqrfl2} \\
		r^{i-1}_H\left( (x,y), (x',y')\right)&=&r^{i-1}_{\Gamma}(x') \label{eqrfl3}. 
	\end{eqnarray}

	\noindent
	Since vertex $x$ is pseudo-distance-regularized, by \cite[Theorem 6]{FM3}, we know that for every integer $i$ $\; (0\leq i \leq d)$ there exist scalars  $\alpha_i, \beta_i$, such that for every $z \in \G_i(x)$ the following hold:
	\begin{equation}
		\label{xeqmain}
		r^{i+1}\ell_{\Gamma}(z) = \alpha_i \; r^{i}_{\Gamma}(z), \qquad
		r^{i}f_{\Gamma}(z)= \beta_i \; r^{i}_{\Gamma}(z). 
	\end{equation}
	It follows from \eqref{eqrfl1}, \eqref{eqrfl2}, \eqref{eqrfl3} and \eqref{xeqmain} that for $1 \le i \le d_H$ and for every $(x',y') \in D^i_{i-1}$ we have
	\begin{eqnarray}
		\label{s3}
		r^i\ell_H\left( (x,y), (x',y')\right)&=& r^i \ell_\G(x') \nonumber \\ &=& \alpha_{i-1} r^{i-1}_\G(x') \nonumber \\ &=& \alpha_{i-1} r^{i-1}_H\left( (x,y), (x',y')\right), 
	\\ \nonumber \\
		r^{i-1}f_H\left( (x,y), (x',y')\right)&=& r^{i-1}f_\G(x') \nonumber \\ &=& \beta_{i-1} r^{i-1}_\G(x') \nonumber \\ &=& \beta_{i-1} r^{i-1}_H\left( (x,y), (x',y')\right). \label{s4}
	\end{eqnarray}
	Therefore, from \eqref{s1}, \eqref{s2}, \eqref{s3} and \eqref{s4}, we see that vertex $w$ of $H$ satisfies the combinatorial conditions of Theorem \ref{thm:main} with the values of $\kappa_i=\alpha_{i-1}, \mu_i=0, \theta_i=\beta_{i-1}, \rho_{i}=0$ for every integer $i \; (1\leq i \leq d_H)$. Consequently, $H$ has, up to isomorphism, a unique irreducible $T$-module with endpoint $1$, and this module is thin.
\end{proof}

\begin{example}
	Let $\G$ be the connected graph presented in Example \ref{graph} and let $S_n$ denote the empty graph of $n$ vertices, for some integer $n\geq 2$. Let $H=H(\Gamma, S_n)$; see for example Figure \ref{stable0002} for the case $n=2$. Consider the Terwilliger algebra $T=T(w)$ of $H$ with respect to $w$. Notice that $H$ is not distance-regular around $w$ since $\G$ is not distance-regular around $x$. However, the trivial module is thin by Proposition \ref{propthin}. It follows from Table \ref{table:yyz2} and the above comments that
	the properties ${ (a), (b)}$ described in part ${(ii)}$ of Theorem \ref{thm:main} hold with the values of $\kappa_i, \mu_i, \theta_i, \rho_i \; (1 \le i \le 3)$ as presented in Table \ref{table:cyz3}.
	Consequently, by Theorem \ref{thm:main}, it holds that $H$ has, up to isomorphism, a unique irreducible $T$-module with endpoint $1$, and this module is thin. Moreover, since $\dim(\Es_1V)=|H(w)|=n$, it is easy to see that there are actually $n-1$ irreducible $T$-modules with endpoint $1$ and these isomorphic $T$-modules have dimension $s=3$.
\end{example}

\begin{figure*}[ht!]
	\begin{tabularx}{\linewidth}[t]{*{2}X}
		\begin{tabular}[c]{p{\linewidth}}
			\centering
			\begin{tabular}{|c|c|c|c|c|c|c|c|c|c|}
				\hline
				$i$ & 1 & 2 & 3  \\
				\hline 
				\hline
				$\kappa_i$ & 2 & 3 & 0   \\
				\hline
				$\mu_i$ & 0 & 0 & 0   \\
				\hline
				\hline
				$\theta_i$ & 0 & 1 & 0   \\
				\hline
				$\rho_i$   & 0 & 0 & 0   \\
				\hline
			\end{tabular}
		\end{tabular} 
		
		\captionof{table}{\label{table:cyz3} Values of scalars $\kappa_i$, $\mu_i$,  $\theta_i$ and  $\rho_i$, $(1 \leq i \leq 3)$.} &
	\end{tabularx}
	
\end{figure*}

\begin{proposition}\label{complete}
	With reference to Notation \ref{notc}, pick vertex $w$ in $H$ and consider the Terwilliger algebra $T=T(w)$. If $\Sigma$ is isomorphic to the complete graph $K_n \; (n\geq 2)$ then graph $H$ has, up to isomorphism, exactly one irreducible $T$-module with endpoint $1$, which is thin.
\end{proposition}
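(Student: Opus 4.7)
The plan is to mirror the strategy used for Proposition \ref{stable}, modified to account for the extra edges contributed by $\Sigma = K_n$ inside the $\Sigma$-fibres of $\Gamma \square \Sigma$. First, Proposition \ref{propthin} immediately gives that the trivial $T$-module is thin, so by Theorem \ref{thm:main} it suffices to verify the combinatorial conditions $(a)$ and $(b)$ of Theorem \ref{thm:main}$(ii)$ for the base vertex $w$ of $H$.

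I would begin by computing the distance partition relative to an arbitrary edge $\{w,(x,y_0)\}$ with $(x,y_0) \in H(w)$. A short case analysis, using that every two distinct vertices of $K_n$ are adjacent (so any $(x',y')$ with $y' \neq y_0$ can be reached from $(x,y_0)$ in $i$ steps by first traversing the $\Sigma$-edge $(x,y_0) \sim (x,y')$ and then a shortest $\Gamma$-path from $x$ to $x'$), yields for $1 \le i \le d_H$,
\begin{equation}
D^i_{i-1}(w,(x,y_0)) = \Gamma_{i-1}(x) \times \{y_0\}, \qquad D^i_i(w,(x,y_0)) = \Gamma_{i-1}(x) \times (Y \setminus \{y_0\}), \qquad D^i_{i+1}(w,(x,y_0)) = \emptyset. \nonumber
\end{equation}
In particular $D^1_1(w,(x,y_0))$ is nonempty, so by Proposition \ref{t(y)iii}, $t(y) = 0$ for every $y \in H(w)$.

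The key observation for the walk counts is that any $r$-step in $H$ with respect to $w$ must strictly increase the $\Gamma$-distance to $x$, and therefore uses only $\Gamma$-edges, keeping the $\Sigma$-coordinate constant. Hence for every $(x',y') \in H_i(w)$, $\ell r^i_H((x,y_0),(x',y')) = r^{i-1}_\Gamma(x')$. By distinguishing whether the terminal $\ell$-step or $f$-step traverses a $\Gamma$-edge or a $\Sigma$-edge, one further shows that for $(x',y') \in D^i_i$ (so $y' \ne y_0$), $r^i\ell_H((x,y_0),(x',y')) = 0$ and $r^{i-1}f_H((x,y_0),(x',y')) = r^{i-1}_\Gamma(x')$, while for $(x',y_0) \in D^i_{i-1}$, $r^i\ell_H((x,y_0),(x',y_0)) = r^i\ell_\Gamma(x')$, $r^{i-1}f_H((x,y_0),(x',y_0)) = r^{i-1}f_\Gamma(x')$ and $r^{i-1}_H((x,y_0),(x',y_0)) = r^{i-1}_\Gamma(x')$.

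Finally, since $\Gamma$ is pseudo-distance-regular around $x$, by \cite[Theorem 6]{FM3} there exist scalars $\alpha_{i-1}, \beta_{i-1}$ with $r^i\ell_\Gamma(x') = \alpha_{i-1}\, r^{i-1}_\Gamma(x')$ and $r^{i-1}f_\Gamma(x') = \beta_{i-1}\, r^{i-1}_\Gamma(x')$ for every $x' \in \Gamma_{i-1}(x)$. Substituting these identities into the walk counts just computed verifies conditions $(a)$ and $(b)$ with the choice $\kappa_i = \alpha_{i-1}$, $\mu_i = 0$, $\theta_i = \beta_{i-1} - 1$ and $\rho_i = 1$; and since $D^i_{i+1}$ is empty for all $i \ge 1$, the ``Moreover'' clause of Theorem \ref{thm:main}$(ii)$ imposes no additional restriction. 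The main technical obstacle is the bookkeeping over $D^i_i$: unlike the $\Sigma = S_n$ case of Proposition \ref{stable}, here the $\Sigma$-edges in $K_n$ produce a nonzero contribution to $r^{i-1}f_H$ on $D^i_i$, which forces $\rho_i = 1$ (rather than $\rho_i = 0$) and correspondingly shifts $\theta_i$ by $-1$, so the structure constants are genuinely different from those produced in the empty-graph construction.
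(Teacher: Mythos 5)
Your proposal is correct and follows essentially the same route as the paper's proof: establish the distance partition $D^i_{i-1}=\G_{i-1}(x)\times\{y_0\}$, $D^i_i=\G_{i-1}(x)\times(Y\setminus\{y_0\})$, $D^i_{i+1}=\emptyset$, compute the walk counts by tracking whether steps use $\G$-edges or $\Sigma$-edges, and invoke \cite[Theorem 6]{FM3} to land on the same constants $\kappa_i=\alpha_{i-1}$, $\mu_i=0$, $\theta_i=\beta_{i-1}-1$, $\rho_i=1$. The only (harmless) quibble is the aside invoking Proposition \ref{t(y)iii}, which presupposes the conclusion being proved and is in any case not needed, since you already derive $D^i_{i+1}=\emptyset$ directly from the adjacency structure.
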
 

\begin{proof}
	By Proposition \ref{propthin}, we first observe the trivial module is thin. We will next show that $H$ satisfies the combinatorial conditions of Theorem \ref{thm:main}.  Suppose that $\Sigma$ is isomorphic to the complete graph $K_n \; (n\geq 2)$. Pick $(x,y)\in H(w)$ and consider the sets $D^i_j=D^i_j(w, (x,y))$. Since the eccentricity of $x$ equals $d$ it is easy to see the sets $D^{j}_{j} \; (1\leq j\leq d_H)$ and $D_{j-1}^j \; (1\leq j\leq d_H)$ are all nonempty for all $(x,y)\in H(w)$.    Consequently, by Lemma \ref{Dii zero for all y and i} the set $D^j_{j+1}$ is empty for every $j \; (1\leq j\leq d_H)$ and for all $(x,y)\in H(w)$. In addition, we also notice
	\begin{eqnarray}
		D^j_{j}(w, (x,y)) &=& \G_{j-1}(x) \times (Y\setminus\{y\}) \nonumber \\ &=& \{(u,y') \mid u \in \G_{j-1}(x), y' \in Y, y'\neq y\}, \label{djj1} \\ \nonumber \\
		D^j_{j-1}(w, (x,y)) &=& \G_{j-1}(x) \times \{y\} \nonumber \\ &=& \{(u,y) \mid u \in \G_{j-1}(x)\}. \label{djj2}
	\end{eqnarray} 
	
	\noindent Pick $(x',y') \in H_i(w)$ for $1\leq i\leq d_H$. We observe that 
	\begin{eqnarray}\label{s11}
		\ell r^i_H\left( (x,y), (x',y')\right)= r^{i-1}_{\Gamma}(x') 
	\end{eqnarray}
	which is a positive integer since $\partial_{\Gamma}(x,x')=i-1$ implies $r^{i-1}_{\Gamma}(x')>0$. Moreover, since every vertex in $D^i_{i}$ has no neighbours in $D^{i+1}_{i}$,  for  $(x',y') \in D^i_{i} \; (1\leq i\leq d_H)$, it follows that
	\begin{eqnarray}
		r^i\ell_H\left( (x,y), (x',y')\right)=0. \label{s112}
	\end{eqnarray}

	\noindent In addition, from the definition of $H$, \eqref{djj1} and \eqref{djj2}, it is easy to see every vertex  $(x',y') \in D^i_{i} \; (1\leq i\leq d_H)$ has exactly one neighbour in $D^i_{i-1}$ which is the vertex $(x',y)$. This implies the number of walks from $(x,y)$ to $(x',y')$ of the shape $r^{i-1}f$ with respect to $w$ is equal to the number of walks from $x$ to $x'$ of the shape $r^{i-1}$ with respect to $x$. Therefore, from the above comments and \eqref{s11}, for  $(x',y') \in D^i_{i} \; (1\leq i\leq d_H)$,
	\begin{eqnarray}\label{s111}
		r^{i-1}f_{H}\left( (x,y), (x',y')\right)= r^{i-1}_{\Gamma}(x')=	\ell r^i_H\left( (x,y), (x',y')\right). 
	\end{eqnarray}
	Similarly, for  $(x',y') \in D^i_{i-1} \; (1\leq i\leq d_H)$ we have 
	\begin{eqnarray}
		r^i\ell_H\left( (x,y), (x',y')\right)&=&r^{i}\ell_{\Gamma}(x'),  \label{xeqrfl1}\\
		r^{i-1}f_H\left( (x,y), (x',y')\right)&=&r^{i-1}f_{\Gamma}(x'), \label{xeqrfl2} \\
		r^{i-1}_H\left( (x,y), (x',y')\right)&=&r^{i-1}_{\Gamma}(x') \label{xeqrfl3}. 
	\end{eqnarray}
	
	\noindent Since vertex $x$ is pseudo-distance-regularized, by \cite[Theorem 6]{FM3}, we know that for every integer $i$ $\; (0\leq i \leq d)$ there exist scalars  $\alpha_i, \beta_i$, such that for every $z \in \G_i(x)$ the following hold:
	\begin{equation}
		\label{xxeqmain}
		r^{i+1}\ell_{\Gamma}(z) = \alpha_i \; r^{i}_{\Gamma}(z), \qquad
		r^{i}f_{\Gamma}(z)= \beta_i \; r^{i}_{\Gamma}(z). 
	\end{equation}
	It follows from \eqref{xeqrfl1}, \eqref{xeqrfl2}, \eqref{xeqrfl3} and \eqref{xxeqmain} that for $1 \le i \le d_H$ and for every $(x',y') \in D^i_{i-1}$ we have
	\begin{eqnarray}
		\label{xs3}
		r^i\ell_H\left( (x,y), (x',y')\right)&=& r^i \ell_\G(x') \nonumber \\ &=& \alpha_{i-1} r^{i-1}_\G(x') \nonumber \\ &=& \alpha_{i-1} r^{i-1}_H\left( (x,y), (x',y')\right), \\ \nonumber \\
		r^{i-1}f_H\left( (x,y), (x',y')\right)&=& r^{i-1}f_\G(x') \nonumber \\ &=& \beta_{i-1} r^{i-1}_\G(x') \nonumber \\ &=& \beta_{i-1} r^{i-1}_H\left( (x,y), (x',y')\right). \label{xs4}
	\end{eqnarray}
	Therefore, from \eqref{s11}, \eqref{s112}, \eqref{s111}, \eqref{xs3} and \eqref{xs4}, we see that vertex $w$ of $H$ satisfies the combinatorial conditions of Theorem \ref{thm:main} with the values of $\kappa_i=\alpha_{i-1}, \mu_i=0, \theta_i=\beta_{i-1}-1, \rho_{i}=1$ for every integer $i \; (1\leq i \leq d_H)$. Consequently, $H$ has, up to isomorphism, a unique irreducible $T$-module with endpoint $1$, and this module is thin.
\end{proof}

\begin{example}
	Let $\G$ be the connected graph presented in Example \ref{graph} and let $K_n$ denote the complete graph of $n$ vertices, for some integer $n\geq 2$. Let $H=H(\Gamma, K_n)$; see for example Figure \ref{complete0002} for the case $n=2$. Consider the Terwilliger algebra $T=T(w)$ of $H$ with respect to $w$.  Notice that $H$ is not distance-regular around $w$ since $\G$ is not distance-regular around $x$. However, the trivial module is thin by Proposition \ref{propthin}. It follows from Table \ref{table:yyz2} and the above comments that
	the properties ${ (a), (b)}$ described in part ${(ii)}$ of Theorem \ref{thm:main} hold with the values of $\kappa_i, \mu_i, \theta_i, \rho_i \; (1 \le i \le 3)$ as presented in Table \ref{table:comyz3}.
	Consequently, by Theorem \ref{thm:main}, it holds that $H$ has, up to isomorphism, a unique irreducible $T$-module with endpoint $1$, and this module is thin. Moreover, since $\dim(\Es_1V)=|H(w)|=n$, it is easy to see that there are actually $n-1$ irreducible $T$-modules with endpoint $1$ and these isomorphic $T$-modules have dimension $s=3$.
\end{example}

\begin{figure*}[ht!]
	\begin{tabularx}{\linewidth}[t]{*{2}X}
		\begin{tabular}[c]{p{\linewidth}}
			\centering
			\begin{tabular}{|c|c|c|c|c|c|c|c|c|c|}
				\hline
				$i$ & 1 & 2 & 3  \\
				\hline 
				\hline
				$\kappa_i$ & 2 & 3 & 0   \\
				\hline
				$\mu_i$ & 0 & 0 & 0   \\
				\hline
				\hline
				$\theta_i$ & -1 & 0 & -1   \\
				\hline
				$\rho_i$   & 1 & 1 & 1   \\
				\hline
			\end{tabular}
		\end{tabular}
		
		\captionof{table}{	\label{table:comyz3} Values of scalars $\kappa_i$, $\mu_i$,  $\theta_i$ and  $\rho_i$, $(1 \leq i \leq 3)$.} &
	\end{tabularx}
	
\end{figure*}

We are now ready to prove the main result of this subsection. 

\begin{theorem}
	With reference to Notation \ref{notc}, pick vertex $w$ in $H$ and consider the Terwilliger algebra $T=T(w)$. Graph $H$ has, up to isomorphism, exactly one irreducible $T$-module with endpoint $1$, which is thin if and only if $\Sigma$ is either isomorphic to the empty graph $S_n \; (n\geq 2)$ or to the complete graph $K_n \; (n\geq 2)$.
\end{theorem}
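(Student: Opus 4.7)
The plan is to split the biconditional into its two halves. The sufficient direction is essentially free: Propositions \ref{stable} and \ref{complete} already establish that when $\Sigma \cong S_n$ or $\Sigma \cong K_n$ (with $n \ge 2$), the graph $H$ has, up to isomorphism, a unique thin irreducible $T$-module with endpoint $1$. So the bulk of the work is the necessity.

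For the necessity I would argue by contrapositive. Suppose $\Sigma$ is neither empty nor complete. Since $\Sigma$ is regular, its common valency $k$ satisfies $1 \le k \le n-2$, so \emph{every} vertex $y$ of $\Sigma$ has both a $\Sigma$-neighbor and a $\Sigma$-non-neighbor distinct from itself. The key combinatorial step is to compute the relevant entries of the distance partition of $H$ with respect to an edge $\{w,(x,y)\}$. Any path in $H$ between the two vertices $(x,y)$ and $(x,y')$ of the same $\Sigma$-fiber either stays inside that fiber (length $\partial_\Sigma(y,y')$) or uses $w$ as a shortcut (length $2$), so
\begin{equation*}
\partial_H\bigl((x,y),(x,y')\bigr) \;=\; \min\bigl\{\partial_\Sigma(y,y'),\,2\bigr\}.
\end{equation*}
Together with the already noted fact that $\partial_H(w,(x,y'))=1$ for every $y'\in Y$, this identifies
\begin{equation*}
D^{1}_{1}(w,(x,y)) = \{x\}\times \Sigma(y), \qquad D^{1}_{2}(w,(x,y)) = \{x\}\times \bigl(Y\setminus(\Sigma(y)\cup\{y\})\bigr).
\end{equation*}
Because $1\le k\le n-2$, both of these sets are nonempty simultaneously.

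Now I would invoke Lemma \ref{Dii zero for all y and i} with $i=1$: the nonemptiness of $D^{1}_{2}(w,(x,y))$ forces $D^{1}_{1}(w,(x',y'))$ to be empty for every $(x',y')\in H(w)$, contradicting the second identification above. This contradiction shows that $H$ cannot have, up to isomorphism, a unique thin irreducible $T$-module with endpoint $1$ whenever $\Sigma$ is neither empty nor complete, completing the contrapositive.

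The plan involves no serious obstacle: the whole argument rests on the elementary distance computation in $H$, after which Lemma \ref{Dii zero for all y and i} does all the heavy lifting. The mildest subtlety is simply to observe that in $H$ the added vertex $w$ acts as a uniform length-$2$ shortcut between any two vertices of a single $\Sigma$-copy, which is exactly what collapses $\partial_\Sigma$ to $\min\{\partial_\Sigma,2\}$ and thereby turns the global structure of $\Sigma$ into an intersection-diagram condition at level $i=1$.
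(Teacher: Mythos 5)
Your proposal is correct and follows essentially the same route as the paper: sufficiency is delegated to Propositions \ref{stable} and \ref{complete}, and necessity is obtained by showing that $D^1_1(w,(x,y))$ and $D^1_2(w,(x,y))$ are simultaneously nonempty and invoking Lemma \ref{Dii zero for all y and i}. The only (harmless) difference is that you use the regularity of $\Sigma$ to produce a vertex with both a neighbour and a non-neighbour, whereas the paper runs a short three-vertex adjacency case analysis that does not need regularity.
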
 

\begin{proof}
	By Proposition \ref{propthin}, we observe the trivial module is thin. Assume first that $H$ has, up to isomorphism, exactly one irreducible $T$-module with endpoint $1$, which is thin. We next claim that $\Sigma$ is either isomorphic to the empty graph $S_n \; (n\geq 2)$ or to the complete graph $K_n \; (n\geq 2)$. Let $Y$ denote the vertex set of $\Sigma$. If $|Y|=2$ then the statement trivially follows. So, to prove this assertion, assume that $|Y|>2$. Given any three vertices $y, y', y'' \in Y$, suppose there exist both a pair of adjacent vertices and a pair of nonadjacent vertices in $\Sigma$. Without loss of generality we could assume that $y$ is adjacent to $y'$ but not to $y''$. Since $y$ and $y'$ are adjacent, we thus have that $(x,y')$ is a common neighbour of both $w$ and $(x,y)$ in $H$. Moreover, note that $\partial_H(w, (x,y''))=1$ and since $y$ and $y''$ are not adjacent, $\partial_H((x,y), (x,y''))=2$. Hence, the sets  $D^1_2(w, (x,y))$ and $D^1_1(w, (x,y))$ are both nonempty, contradicting Lemma \ref{Dii zero for all y and i}. Consequently, any three vertices in $Y$ either form a stable set or a clique. This clearly implies that $\Sigma$ is either isomorphic to the empty graph $S_n \; (n\geq 2)$ or to the complete graph $K_n \; (n\geq 2)$, which proves our claim.  Notice also that the second part of the result immediately follows from Proposition \ref{stable} and Proposition \ref{complete}. This finishes the proof.	
\end{proof}



\section{Acknowledgement}
The author would like to thank Štefko Miklavič for helpful and constructive comments that greatly contributed to improving the final version of this article.
This work is supported in part by the Slovenian Research Agency (research program P1-0285, research project J1-2451 and Young Researchers Grant).

\section{Data availability}

Data sharing is not applicable to this article as no datasets were generated or analysed during the current study.

\end{document}